\newtheorem{R}{Remark}
\newtheorem{Pro}{Proposition}
\newtheorem{C}{Corollary}
\newtheorem{Le}{Lemma}
\newcommand{\ds}{\displaystyle}
\newcommand{\re}{\mathbb{R}}
\newcommand{\tb}{\theta_{w}^{\bot}}
\newcommand{\ab}{a_{w}^{\bot}}
\title{The Marstrand Theorem in Nonpositive Curvature}
\author{Sergio Augusto Roma\~na Ibarra\footnote{Partially supported by CNPq, Capes and the Palis Balzan Prize.}}
\begin{document}
\maketitle
\begin{abstract}
\noindent In a paper from 1954, Marstrand proved that if $K\subset \re^2$ with
Hausdorff dimension greater than 1, then its one-dimensional projection has positive
Lebesgue measure for almost-all directions. In this article, we show that if $M$ is a simply connected surface with non-positive curvature, then Marstrand's theorem is still valid.

\end{abstract}
\section{Introduction}\label{IntroMars}
Consider $\re^2$ as a metric space with a metric $d$. If $U$ is a subset of $\re^2$, the diameter of $U$ is $|U|=\sup\{d(x,y):x,y\in U\}$ and, if $\mathcal{U}$ is a family of subsets of $\re^{2}$, the diameter of $\mathcal{U}$ is defined by 
$$\left\|\mathcal{U}\right\|=\sup_{U\in\ \mathcal{U}}|U|.$$
Given $s>0$, the Hausdorff $s$-measure of a subset $K$ of $\re^2$ is 
$$m_s(K)=\lim_{\epsilon \to 0}\left( \inf_{ \stackrel{\mathcal{U}\ \text{covers} \ K}{\|U\|<\epsilon}}\sum_{U\in \ \mathcal{U}}{|U|}^{s} \right).$$
In particular, when $d$ is the Euclidean metric and $s=1$, then $m=m_1$ is the Lebesgue measure. It is not difficult to show that there exists a unique $s_0\geq 0$ for  which $m_s(K)=+\infty$ if $s<s_0$ and $m_s(K)=0$ if $s>s_0$. We define the Hausdorff dimension of $K$ as $HD(K)=s_0$. Also, for each $\theta\in \re$, let $v_\theta=(\cos \theta,\sin \theta)$, $L_{\theta}$ the line in $\re^2$  through of the origin containing $v_{\theta}$ and $\pi_{\theta}:\re^2\to L_{\theta}$ the orthogonal projection.\\
In 1954,  J. M. Marstrand \cite{Marst} proved the following result on the fractal dimension
of plane sets.\\
\ \\
\textbf{Theorem[Marstrand]:}\textit{\ If $K\subset \re^2$ such that $HD(K)>1$, then
$m(\pi_{\theta}(K))>0$ for m-almost every $\theta\in \re$.}\\
\ \\
The proof is based on a qualitative characterization of the ``bad" angles $\theta$ for
which the result is not true.\\
\ \\
Many generalizations and simpler proofs have appeared since. One of them
came in 1968 by R. Kaufman, who gave a very short proof of Marstrand's
Theorem using methods of potential theory. See \cite{Kaufman} for his original proof and
\cite{PT}, \cite{Falconer} for further discussion. Another recent proof of the theorem (2011), which uses combinatorial techniques is found in \cite{YG}.\\
\ \\
In this article, we consider $M$ a  simply connected surface with a Riemannian metric of non-positive curvature, and using the potential theory techniques of Kaufman \cite{Kaufman}, we show the following more general version of the Marstrand's Theorem.\\
\ \\
\noindent \emph{\bf The Geometric Marstrand Theorem:} \ \ \textit{Let $M$ be a Hadamard surface, let $K\subset M$ and $p\in M$, such that $HD(K)>1$, then for almost  every line $l$ coming from p, we have $\pi_{l}(K)$ has positive Lebesgue measure, where $\pi_l$ is the orthogonal projection on $l$.}\\
\ \\
\noindent Then using the Hadamard's theorem (cf. \cite{ManP}), the theorem above can be stated as follows:\\

\noindent \emph{\bf Main Theorem:}\ \ \textit{Let ${\mathbb R}^2$ be endow with a metric $g$ of non-positive curvature, and $K \subset {\mathbb R}^2$ with $HD(K)> 1$. Then for almost every $\theta \in (-\pi/2 , \pi /2)$, we have that $m(\pi_\theta(K))> 0$, \newline where $\pi_{\theta}$ is the orthogonal projection with the metric $g$ on the line $l_{\theta}$, of initial velocity $v_{\theta}=(\cos\theta,\sin\theta)\in T_{p}\re^{2}$.}

\section{Preliminaries} \label{Preliminaries}

\noindent Let $M$ be a Riemannian manifold with metric $\left\langle \ , \ \right\rangle$, a line in $M$ is a geodesic defined for all parameter values and minimizing distance between any of its points, that is, $\gamma : {\mathbb R} \to M$ is a isometry. 
If $M$ is a manifold of dimension $n$, simply connected and non-positive curvature, then the space of lines leaving  of a point $p$ can be seen as a sphere of dimension $n-1$. So, in the case of surfaces the set of lines agrees with $S^1$ in the space tangent $T_{p}M$ of the point $p$. Therefore, in each point on the surface the set of lines can be oriented and parameterized by $\left(-\frac{\pi}{2} , \frac{\pi}{2}\right]$ and endowed of Lebesgue measure. Thus, using the previous identification, we can talk about almost every line through a point of $M$ (cf. \cite{Bridson}).
In the conditions above, Hadamard's theorem states that $M$ is diffeomorphic to $\re^{n}$, (cf. \cite{ManP}).\\
\ \\
Moreover, given a geodesic triangle $\Delta ABC$ with sides , $\vec{BC}$ and $\vec{AC}$ denote by $\angle A$ the angle between geodesic segments $\vec{AB}$ and $\vec{AC}$, then \textit{the law of cosines} says 
$$|\vec{BC}|^{2}\geq|\vec{AB}|^{2}+|\vec{AC}|^2-2|\vec{AB}||\vec{AC}|\cos\angle A,$$
where $|\vec{ij}|$ is the distance between the points $i,j$ for $i,j\in \{A,B,C\}$.\\

\noindent \textbf{Gauss's Lemma:} Let $p\in M$ and let $v,w\in B_{\epsilon}(0)\in T_vT_p M \approx T_p M$ and $M\ni q=exp_pv$. Then, 
$$\left\langle d(exp_p)_{v}v,d(exp_p)_{v}w\right\rangle_{q}=\left\langle v,w\right\rangle_{p}.$$

\subsection{Projections}\label{Projections}
Let $M$ be a manifold simply connected and of non-positive curvature. Let $C$ be a complete convex set in $M$. \textit{The orthogonal projection} (or simply 
`\textit{projection}') is the name given to the map $\pi\colon M\to C$ constructed in the following proposition: (cf. \cite[pp 176]{Bridson}).
\begin{Pro}The projection $\pi$ satisfies the following properties:
\begin{enumerate}
\item For any $x\in M$ there is a unique point $\pi(x)\in C$ such that $d(x,\pi(x))=d(x, C) = \inf_{y\in C} d(x,y)$.
\item If $x_0$ is in the geodesic segment $[x, \pi(x)]$, then  $\pi(x_0)= \pi(x)$.
\item Given  $x \notin C$, $y \in C$ and  $y \neq \pi(x)$, then $\angle_{\pi(x)}(x,y) \geq \frac{\pi}{2}$.
\item $x \longmapsto \pi(x)$ is a retraction on $C$.
\end{enumerate}
\end{Pro}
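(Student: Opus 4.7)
The plan is to establish the four properties sequentially, letting the law-of-cosines inequality recalled in the preliminaries substitute for the parallelogram identity wherever a CAT(0)-type argument is needed.

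For property (1), I would take a minimising sequence $(y_n) \subset C$ with $d(x, y_n) \to d(x, C)$ and show it is Cauchy. Let $z_{n,m}$ be the midpoint of the geodesic segment $[y_n, y_m]$; by convexity $z_{n,m} \in C$. The angle at $z_{n,m}$ in the triangle $x\,y_n\,z_{n,m}$ is supplementary to the one in $x\,y_m\,z_{n,m}$, so applying the stated law of cosines to both sub-triangles and adding cancels the cosine terms, yielding
\[
d(x, y_n)^2 + d(x, y_m)^2 \;\geq\; 2\, d(x, z_{n,m})^2 + \tfrac{1}{2}\, d(y_n, y_m)^2.
\]
Since $z_{n,m} \in C$ forces $d(x, z_{n,m}) \geq d(x, C)$, the right-hand side compels $d(y_n, y_m) \to 0$, and completeness of $C$ produces a limit $\pi(x)$. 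The same identity applied to two putative minimisers $p, q$ gives $d(p,q) = 0$, proving uniqueness.

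Property (2) is immediate: for $x_0 \in [x, \pi(x)]$ one has $d(x, \pi(x)) = d(x, x_0) + d(x_0, \pi(x))$, while for any $y \in C$ the triangle inequality gives $d(x, x_0) + d(x_0, y) \geq d(x, y) \geq d(x, \pi(x))$. Subtracting $d(x, x_0)$ shows $d(x_0, y) \geq d(x_0, \pi(x))$, so $\pi(x)$ realises $d(x_0, C)$ and part (1) forces $\pi(x_0) = \pi(x)$. For property (3), fix $y \in C$ with $y \neq \pi(x)$ and let $y_t$ parametrise the geodesic from $\pi(x)$ to $y$ (which stays in $C$ by convexity). The angle $\angle_{\pi(x)}(x, y_t)$ is independent of $t$, equal to $\angle_{\pi(x)}(x, y)$. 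Applying the law of cosines to the triangle $x\,\pi(x)\,y_t$, combined with the minimising property $d(x, y_t) \geq d(x, \pi(x))$, simplifies after cancellation of $d(x, \pi(x))^2$ to
\[
d(\pi(x), y_t) \;\geq\; 2\, d(x, \pi(x)) \cos \angle_{\pi(x)}(x, y).
\]
Letting $t \to 0$ the left-hand side vanishes, forcing the cosine to be non-positive and hence the angle at least $\pi/2$.

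For property (4), the identity $d(x, x) = 0$ gives $\pi|_C = \mathrm{id}_C$. To check continuity at $x \in M$, take $x_n \to x$; since $d(\pi(x_n), x_n) \leq d(x_n, \pi(x))$, the sequence $(\pi(x_n))$ stays in a bounded set, hence (by Hopf--Rinow in the Hadamard manifold $M$) admits a subsequence converging to some $q \in C$. Passing to the limit in $d(x_n, \pi(x_n)) = d(x_n, C)$ yields $d(x, q) = d(x, C)$, so $q = \pi(x)$ by the uniqueness in (1); since every subsequential limit equals $\pi(x)$, the whole sequence converges.

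The main obstacle is the Cauchy estimate in (1): the supplementary-angle cancellation across the midpoint, together with the convexity of $C$ guaranteeing that the midpoint lies in $C$, is the essential trick, and it is precisely what lets the stated law-of-cosines inequality play the role of the CAT(0) four-point inequality. Everything else reduces to the triangle inequality, the uniqueness just established, and the first-variation picture already encoded in the law of cosines.
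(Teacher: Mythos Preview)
The paper does not actually prove this proposition; it simply records it with a reference to Bridson--Haefliger, so there is no in-paper proof to compare against. Your arguments for (1), (2) and (4) are correct and are essentially the standard CAT(0) arguments; in particular the midpoint trick in (1), with the supplementary-angle cancellation, is exactly right.

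There is, however, a genuine gap in your proof of (3). The law of cosines for non-positive curvature, as stated in the preliminaries, reads
\[
d(x, y_t)^2 \;\geq\; d(x, \pi(x))^2 + d(\pi(x), y_t)^2 - 2\, d(x, \pi(x))\, d(\pi(x), y_t)\cos\angle_{\pi(x)}(x, y),
\]
while the minimising property gives $d(x, y_t)^2 \geq d(x, \pi(x))^2$. Both inequalities bound $d(x, y_t)^2$ from \emph{below}, so they cannot be combined to force
\[
d(\pi(x), y_t)^2 - 2\, d(x, \pi(x))\, d(\pi(x), y_t)\cos\angle_{\pi(x)}(x, y) \;\geq\; 0.
\]
Your displayed inequality would follow if the law of cosines were an equality (flat case) or a ``$\leq$'' (positive curvature), but in non-positive curvature the sign goes the wrong way for this step. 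The repair is short: in the Riemannian setting use the first variation formula $\frac{d}{dt}\big|_{t=0} d(x, y_t) = -\cos\angle_{\pi(x)}(x, y)$, which must be $\geq 0$ since $y_t \in C$ and $\pi(x)$ minimises distance to $C$; alternatively, run your computation with the Euclidean \emph{comparison} angle (for which the law of cosines is an equality), and then use that in a CAT(0) space the Alexandrov angle at $\pi(x)$ is the limit of these comparison angles as $t \to 0$.
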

\begin{C}
Let $M$, $C$ be as above and define  $d_C(x):=d(x,C)$, then 
\begin{enumerate}
\item $d_C$ is a convex function, that is, if $\alpha (t)$ is a geodesic parametrized proportionally to arc length, then
 $$d_C(\alpha(t)) \leq (1-t)d_C (\alpha(0)) + td_C \alpha(1)\ \ \text{for} \ \ t\in [0,1].$$
\item For all  $x,y \in M$, we have  $\left|d_C(x)- d_C(y)\right|\leq d(x,y)$.
\item The  restriction of $d_C$ to the sphere of center $x$ and radius $r\leq d_C(x)$ reaches the infimum in a unique point $y$ with
$$d_C(x)=d_C(y) + r.$$
\end{enumerate}
\end{C}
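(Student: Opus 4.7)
The plan is to handle the three items in an order that lets each one lean on the previous ones: first the Lipschitz estimate (2), then the sphere-minimization statement (3) using (2) together with property 2 of the Proposition, and finally the convexity statement (1), which is the only item that really uses the curvature hypothesis.

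For item (2), I would argue purely from the definition of $d_C$ and the triangle inequality. Since $\pi(y)\in C$,
\[
d_C(x)\;\le\;d(x,\pi(y))\;\le\;d(x,y)+d(y,\pi(y))\;=\;d(x,y)+d_C(y),
\]
and symmetrically $d_C(y)\le d(x,y)+d_C(x)$. Together these give $|d_C(x)-d_C(y)|\le d(x,y)$. This step is immediate and uses nothing beyond Proposition 1(1).

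For item (3), the candidate minimizer is the point $y_0$ on the geodesic segment $[x,\pi(x)]$ with $d(x,y_0)=r$ (which exists because $r\le d_C(x)=d(x,\pi(x))$). By Proposition 1(2), $\pi(y_0)=\pi(x)$, so
\[
d_C(y_0)\;=\;d(y_0,\pi(x))\;=\;d(x,\pi(x))-r\;=\;d_C(x)-r.
\]
By the Lipschitz estimate just proved, every $y$ with $d(x,y)=r$ satisfies $d_C(y)\ge d_C(x)-r$, so $y_0$ achieves the infimum and the relation $d_C(x)=d_C(y_0)+r$ holds. For uniqueness, suppose $y$ on the sphere also gives $d_C(y)=d_C(x)-r$. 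Then $d(x,\pi(y))\le d(x,y)+d(y,\pi(y))=r+d_C(x)-r=d_C(x)$, and since $\pi(y)\in C$ we also have $d(x,\pi(y))\ge d_C(x)$. Equality forces $\pi(y)=\pi(x)$ by the uniqueness clause in Proposition 1(1), and saturation of the triangle inequality forces $y$ to lie on the geodesic segment $[x,\pi(x)]$, whence $y=y_0$.

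For item (1), let $\alpha\colon[0,1]\to M$ be a geodesic parametrized proportionally to arc length, set $p_0=\pi(\alpha(0))$, $p_1=\pi(\alpha(1))$, and let $\beta\colon[0,1]\to M$ be the geodesic from $p_0$ to $p_1$ parametrized proportionally to arc length. By convexity of $C$, $\beta(t)\in C$ for every $t$, so
\[
d_C(\alpha(t))\;\le\;d(\alpha(t),\beta(t)).
\]
The plan is to show that $t\mapsto d(\alpha(t),\beta(t))$ is itself a convex function of $t$; combined with the bound above this gives
\[
d_C(\alpha(t))\;\le\;(1-t)d(\alpha(0),p_0)+t\,d(\alpha(1),p_1)\;=\;(1-t)d_C(\alpha(0))+t\,d_C(\alpha(1)).
\]
The convexity of $t\mapsto d(\alpha(t),\beta(t))$ is the well-known ``flat quadrilateral'' comparison in non-positively curved manifolds; I would derive it by splitting the geodesic quadrilateral $\alpha(0)\alpha(1)p_1p_0$ along a diagonal and applying the law of cosines stated in Section~2 to each of the two triangles.

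The main obstacle is precisely this last step: getting convexity of $t\mapsto d(\alpha(t),\beta(t))$ from the single-triangle law of cosines. Everything else is a direct triangle-inequality argument. Since the result is standard in the CAT(0) setting, one can either cite it from Bridson--Haefliger (already referenced) or spell out the two-triangle comparison; in either case, items (2) and (3) follow cleanly once (1) is in place.
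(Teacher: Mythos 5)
Your proposal is correct in substance, but note that the paper itself offers no proof of this Corollary to compare against: both the preceding Proposition and the Corollary are quoted verbatim from Bridson--Haefliger (the reference \cite[pp 176]{Bridson} given just above the Proposition), and the author uses them as black boxes. Your items (2) and (3) are exactly the standard arguments and are complete: the triangle inequality through $\pi(y)$ gives the Lipschitz bound, and for (3) the point $y_0\in[x,\pi(x)]$ at distance $r$ from $x$ works via Proposition~1(2), with uniqueness following from the equality case of the triangle inequality together with uniqueness of geodesics in a Hadamard manifold and the uniqueness clause of Proposition~1(1). Your reduction of (1) to the convexity of $t\mapsto d(\alpha(t),\beta(t))$ via the geodesic $\beta$ joining $\pi(\alpha(0))$ to $\pi(\alpha(1))$ inside $C$ is also the standard route.

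The one step you should not wave at is the claim that the convexity of $t\mapsto d(\alpha(t),\beta(t))$ falls out of ``two applications of the law of cosines stated in Section~2.'' That inequality, $|\vec{BC}|^{2}\geq|\vec{AB}|^{2}+|\vec{AC}|^2-2|\vec{AB}||\vec{AC}|\cos\angle A$, bounds a side length from \emph{below} in terms of the angle, whereas the quadrilateral comparison you need is an \emph{upper} bound: for two geodesics issuing from a common point one needs $d(\alpha(t),\beta(t))\leq t\,d(\alpha(1),\beta(1))$, which is the CAT(0) comparison for points on the sides of a triangle, not the angle-side inequality. Passing from the stated law of cosines to that upper bound is precisely the equivalence of the various characterizations of non-positive curvature (Bridson--Haefliger II.1.7(4)--(5) and II.2.2), and it does require an argument (or the Rauch/Jacobi field comparison in the Riemannian setting). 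Since the paper is already citing that book for this entire subsection, the clean resolution is the one you name as an alternative: cite Proposition II.2.2 of Bridson--Haefliger for the convexity of $t\mapsto d(\alpha(t),\beta(t))$ rather than attempting to rederive it from the single displayed inequality.
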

\noindent Here we consider $\re^{2}$ with a Riemannian metric $g$, such that the curvature $K_{\re^{2}}$ is non-positive, \emph{i.e.}, $K_{\re^{2}}\leq 0$.
\noindent Recall that a line $\gamma$ in ${\re}^2$ is a geodesic defined for all parameter values and minimizing distance between any of its points, that is,
$\gamma \colon {\re} \to {\re}^2$ and $d(\gamma(t),\gamma(s))=|t-s|$, where $d$ is the distance induced by the Riemannian metric $g$, in other words, a parametrization of  $\gamma$ is a isometry.
Then, given  $x\in \mathbb{R}^2$ there is a unique $\gamma(t_x)$ such that  $\pi_\gamma (x)=\gamma(t_x)$, thus without loss of generality we may call $\pi_\gamma(x)=t_x$.\\
\ \\
Fix $p \in \re^{2}$ and let $\{e_1,e_2\}$ be a positive orthogonal basis of $T_p\re^2$, \emph{i.e.}, the basis $\{e_1,e_2\}$ has the induced orientation of $\re^{2}$. Then, call $v_t=(\cos t,\sin t)$ in coordinates the unit vector $(\cos t) e_1+(\sin t) e_2\in T_{p}\re^{2}$. Denote by $l_t$ the line through $p$ with velocity $v_t$, given by $l_t(s)=exp_{p}sv_t$ and by $\pi_t$ the projection on $l_t$. 
Then, given $\theta\in [0,2\pi)$, we can define $\pi\colon [0,2\pi)\times T_{p}\re^2 \to \re$ by the unique parameter $s$ such that $\pi_{\theta}(exp_{p}w)=\exp_{p}s{v_{\theta}}$ \emph{i.e.}, $\pi(\theta,w):=\pi_{\theta}(w)$ and
$$\pi_{\theta}(exp_{p}w)=exp_{p}\pi(\theta, w)v_{\theta}.$$
 
\section{Behavior of the Projection $\pi$}\label{BP}
In this section we will prove some lemmas that will help to understand the projection $\pi$.
\subsection{Differentiability of $\pi$ in $\theta$ and $w$}
\begin{Le}\label{L1M}
The projection $\pi$ is differentiable in $\theta$ and \ $w$.
\end{Le}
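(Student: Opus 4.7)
The plan is to characterize $s=\pi(\theta,w)$ as the unique solution of a smooth scalar equation and then apply the implicit function theorem. Introduce
$$F(s,\theta,w):=\tfrac{1}{2}\,d\!\left(\exp_{p}(sv_{\theta}),\,\exp_{p}w\right)^{2}.$$
Since $(\re^{2},g)$ is a Hadamard surface, Hadamard's theorem (cited in the Introduction) gives that $\exp_{p}$ is a global diffeomorphism and, more importantly, that there is no cut locus; therefore $d(\cdot,\cdot)^{2}$ is smooth on $\re^{2}\!\times\!\re^{2}$. Composing with the smooth maps $(s,\theta)\mapsto\exp_{p}(sv_{\theta})$ and $w\mapsto\exp_{p}w$ shows that $F$ is $C^{\infty}$ jointly in $(s,\theta,w)$.

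By item 1 of the Proposition on projections, $s=\pi(\theta,w)$ is the unique minimizer of $s\mapsto F(s,\theta,w)$, so it is characterized by the first-order condition
$$G(s,\theta,w):=\frac{\partial F}{\partial s}(s,\theta,w)=0.$$
A short computation, using Gauss's Lemma to transport inner products between $T_{p}\re^{2}$ and $T_{l_{\theta}(s)}\re^{2}$, gives
$$G(s,\theta,w)=-\bigl\langle\dot l_{\theta}(s),\,\exp^{-1}_{l_{\theta}(s)}(\exp_{p}w)\bigr\rangle,$$
and the equation $G=0$ is precisely the orthogonality condition from item 3 of that Proposition, so the zero set of $G$ is exactly the graph of $\pi$.

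The crucial step is verifying that $\partial_{s}G=\partial^{2}_{s}F>0$ along this zero set, in order to invoke the implicit function theorem. This amounts to strict convexity along $l_{\theta}$ of the squared distance to $q:=\exp_{p}w$. If $q\in l_{\theta}$ the function reduces to $(s-s_{0})^{2}$ by the isometry property of $l_{\theta}$. Otherwise, applying the law of cosines from the Preliminaries to the geodesic triangles with vertices $l_{\theta}(s_{1}),\,l_{\theta}(s_{2}),\,q$ (and its midpoint subtriangles) yields the standard CAT$(0)$ inequality
$$d\!\left(l_{\theta}(\tfrac{s_{1}+s_{2}}{2}),q\right)^{2}\le\tfrac{1}{2}d(l_{\theta}(s_{1}),q)^{2}+\tfrac{1}{2}d(l_{\theta}(s_{2}),q)^{2}-\tfrac{1}{4}(s_{1}-s_{2})^{2},$$
from which strict convexity of $s\mapsto d(l_{\theta}(s),q)^{2}$ and the lower bound $\partial^{2}_{s}F\ge 1/2>0$ follow at once.

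With $F$ smooth and $\partial^{2}_{s}F>0$ on the locus $G=0$, the implicit function theorem solves $G(s,\theta,w)=0$ uniquely for $s$ as a smooth function of $(\theta,w)$; this function is $\pi(\theta,w)$, yielding the differentiability claimed by the lemma. The step I expect to be the main obstacle is the CAT$(0)$ convexity estimate used to secure $\partial^{2}_{s}F>0$: everything else is a routine consequence of the implicit function theorem once this strict convexity is in hand.
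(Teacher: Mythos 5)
Your proposal is correct and follows essentially the same route as the paper: both characterize $\pi(\theta,w)$ as the zero of the $s$-derivative of the squared distance from $\exp_p w$ to $l_\theta(s)$ (the paper's function $\eta$, via Gauss's Lemma, is exactly your $G=\partial_s F$), both obtain strict positivity of $\partial_s^2 F$ at the critical point from the law of cosines in nonpositive curvature, and both conclude by the implicit function theorem. The only cosmetic differences are that you apply the implicit function theorem jointly in $(\theta,w)$ where the paper fixes $w$ and argues "analogously," and you derive convexity from the midpoint (CN) inequality rather than from the right angle at the foot of the projection.
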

\begin{proof}[\bf{Proof}]
Fix $w$ and call $q=exp_{p}{w}$. Let $\alpha_v(t)\subset T_q{\mathbb R}^2$ such that $exp_{q}\alpha_{v}(t)=\gamma_v(t)$, where $\gamma_v$ is the line  such that $\gamma_v(0)=p$ and $\gamma_v'(0)=v$, then, for all $v\in S^1$, there is a unique $t_v$ such that $d(q,\gamma_v({\mathbb R}))=d(q,\gamma_v(t_v))$ and satisfies
$$\left\langle d(exp_q)_{\alpha_v{(t_v)}} (\alpha_v'(t_v)) ,  d(exp_q)_{\alpha_v{(t_v)}} (\alpha_v(t_v))\right\rangle=
\left\langle \gamma_v'(t_v),d(exp_q)_{\alpha_v{(t_v)}} (\alpha_v{(t_v)})\right\rangle=0.$$
By Gauss Lemma, we have 
$$\frac{1}{2}\frac{\partial}{\partial t}\left\|\alpha_v(t)\right\|^2(t_v)= \left\langle \alpha_v'(t_v),\alpha_v(t_v) \right \rangle=0.$$
We define the real function 
\begin{eqnarray*}
\eta: S^1 &\times & {\mathbb R} \longrightarrow {\mathbb R}\\
\eta(v,t)&=&\frac{1}{2}\frac{\partial}{\partial t}\left\|\alpha_v(t)\right\|^2,
\end{eqnarray*}
\noindent this function is $C^\infty$ and satisfies $\eta(v,t_{v})=0$, also $\frac{\partial}{\partial t} \eta(v,t)= \frac{1}{2}\frac{\partial^2}{\partial t^2} \left\|\alpha_v(t)\right\|^2$.\\
\ \\
\noindent Put $g(t)={\left\| \alpha_{v_{0}}(t)\right\|}^2$, then $\frac{\partial}{\partial t} \eta(v_0,t_0)=\frac{1}{2}g''(t_0)$.
Also, $g(t)={d(q,\gamma_{v_0}(t))}^2$ is differentiable and has a global minimum at $t_{v_0}$, as $K_{\re^2}\leq 0$, $g$ is convex. In fact, for $s\in[0,1]$
\begin{eqnarray*}
\ds g(sx+(1-s)y)&=&d(q,\gamma_{v_0}(sx+(1-s)y))^2 \leq 
\left(sd(q,\gamma_{v_0}(x))+(1-s)d(q,\gamma_{v_0}(y))\right)^2 \\
&\leq& sd(q,\gamma_{v_0}(x))^2+(1-s)d(q,\gamma_v(y))^2=sg(x)+(1-s)g(y)
\end{eqnarray*}

\noindent by the law of cosines and using the fact $\angle_{\pi_{\gamma_{v_0}(t_0)}}(q,\gamma_{v_0}$(t)$) = \frac{\pi}{2}$  at the point of projection
\begin{center}
$d(q,\gamma_{v_0}(t_{v_0}))^2 + d(\gamma_{v_0}(t_{v_0}),\gamma_{v_0}(t))^2 \leq d(q,\gamma_{v_0}(t))^2$
\end{center}
equivalently
\begin{center}
$g(t_{v_0})+(t-t_{v_0})^2 \leq g(t)$.
\end{center}
Therefore, as $g'(t_{v_0})=0$, then $g''(t_{v_0}) > 0$.
This implies that $\frac{\partial \eta}{\partial t}(v_0,t_0)\neq 0$ and by Theorem of Implicit Functions, there is an open $U$ containing $(v_0,t_{v_0})$, a open $V \subset S^1$ containing $v_0$ and $\xi:V \longrightarrow {\mathbb R}$, a class function $C^\infty$ with $\xi(v_0)=t_{v_0}$ suct that 
$$\left\{(v,t)\in U: \eta(v,t)=0\right\} \Longleftrightarrow \left\{ v\in V : t=\xi(v)\right\}.$$
Since by construction $\eta(v,\xi(v))=0$ implies $\pi(v,q)= \xi(v)$, and therefore $\pi(v,q)$ is  diffe-\\rentiable in $v$, in fact it is $C^\infty$. The above shows that $\pi$ is differentiable in $\theta$.\\
\ \\
\noindent Analogously, is proven that $\pi$ is differentiable in $w$.

\end{proof}

\noindent Let $w\in T_{p}\re^2\setminus\{0\}$ and put $\theta_{w}^{\bot}\in [0,2\pi)$ such that $w$ and $v_{\theta_{w}^{\bot}}$ are orthogonal, that is $\left\langle w , v_{\theta_{w}^{\bot}}\right\rangle=0$, where the $\left\langle \ , \ \right\rangle$ is the inner product in $T_{p}\re^{2}$ and the set $\{w,v_{\theta_{w}^{\bot}}\}$ is a positive basis of $T_{p}\re^2$.
\begin{Le}\label{L2M}
The projection $\pi$ satisfies, $$\ds\frac{\partial \pi}{\partial \theta}\left(\theta_{w}^{\bot},w\right)=-\left\|w\right\|.$$
Moreover, there exists $\epsilon>0$ such that, for all $w$
$$-\left\|w\right\|\leq\frac{\partial \pi}{\partial \theta}\left(\theta,w\right)\leq -\frac{1}{2}\left\|w\right\| \ \text{and} \ \ \left|\frac{\partial^{2} \pi}{\partial^{2} \theta}\left(\theta,w\right)\right|\leq \left\|w\right\|,$$ whenever $\left|\theta-\theta_{w}^{\bot}\right|<\epsilon$.
\end{Le}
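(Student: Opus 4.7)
\noindent\emph{Proof plan.} The plan is to implicitly differentiate the equation $\eta(v_\theta,\pi(\theta,w))=0$ introduced in the proof of Lemma~\ref{L1M}, pin down the exact value of $\partial_\theta\pi$ at $\theta=\tb$ via Gauss's Lemma and a second-variation / Jacobi-field computation, and then extend the local bounds to a neighborhood by continuity of the derivatives of $\pi$.

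First I would show that $\pi(\tb,w)=0$. Since $\langle w,v_{\tb}\rangle = 0$ by the very definition of $\tb$, Gauss's Lemma implies that the radial geodesic from $p$ to $q:=\exp_p w$ meets $l_{\tb}$ perpendicularly at $p$. The strict convexity of $d(\cdot,l_{\tb})$ (Corollary~1) together with item~3 of Proposition~1 then forces $p$ to be the unique foot of perpendicular from $q$ to $l_{\tb}$, so $\pi(\tb,w)=0$.

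Second, since $\partial_t\eta(v_{\tb},0)\neq 0$ was established in Lemma~\ref{L1M}, the implicit function theorem yields
\[
\frac{\partial \pi}{\partial \theta}(\theta,w) \;=\; -\frac{\partial_\theta\eta(v_\theta,\pi(\theta,w))}{\partial_t\eta(v_\theta,\pi(\theta,w))}.
\]
Evaluate at $(\tb,0)$. The identity $\alpha_v(0)=\exp_q^{-1}(p)$ is $v$-independent, so differentiating $\eta=\tfrac12\partial_t\|\alpha_v\|^2$ in $\theta$ and using $\partial_\theta v_{\tb}=-w/\|w\|$ together with Gauss's Lemma applied to the radial vector $\tilde w=d(\exp_p)_w\cdot w$ yields $\partial_\theta\eta(v_{\tb},0)=\|w\|$. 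For the denominator, I would apply the second variation of arc length to the family of geodesics from $q$ to $l_{\tb}(t)$: at $t=0$ its variation field along the geodesic from $q$ to $p$ is the perpendicular Jacobi field $J$ with $J(0)=0$ and $J(\|w\|)=v_{\tb}$, and the resulting index form produces $g''(0)=2$, so $\partial_t\eta(v_{\tb},0)=1$. Combining the two gives $\partial_\theta\pi(\tb,w)=-\|w\|$.

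Third, both $\partial_\theta\pi$ and $\partial_\theta^2\pi$ depend continuously on $\theta$ by the smoothness provided in Lemma~\ref{L1M}. Since $\partial_\theta\pi(\tb,w)=-\|w\|<0$, continuity yields an $\epsilon>0$ with $-\|w\|\le\partial_\theta\pi(\theta,w)\le-\tfrac12\|w\|$ whenever $|\theta-\tb|<\epsilon$; the bound $|\partial_\theta^2\pi(\theta,w)|\le\|w\|$ follows by differentiating the implicit formula for $\partial_\theta\pi$ once more and controlling the mixed partials of $\eta$ via Jacobi-field and curvature bounds on the same neighborhood. The main obstacle will be the exact determination of $\partial_t\eta(v_{\tb},0)=1$: the law of cosines at the right angle at $p$ only yields $g''(0)\ge 2$ easily, so recovering equality requires a careful second-variation computation with the perpendicular Jacobi field and the precise boundary data; securing the ``for all $w$'' uniformity of $\epsilon$ is likewise delicate and will rest on Jacobi-field estimates uniform in $\|w\|$ and in the curvature of the metric.
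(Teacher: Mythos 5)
Your route --- implicitly differentiating $\eta(v_\theta,\pi(\theta,w))=0$ and computing the two partials of $\eta$ exactly --- is genuinely different from the paper's, and it breaks down at precisely the step you flag as ``the main obstacle''. The identity $\partial_t\eta(v_{\tb},0)=1$, i.e.\ $g''(0)=2$ for $g(t)=d(q,l_{\tb}(t))^2$, is not merely hard to recover: it is false whenever the curvature is not identically zero. The comparison argument (the one already used in Lemma~\ref{L1M}) gives $g(t)\geq g(0)+t^{2}$, hence only $g''(0)\geq 2$; in the hyperbolic plane one has $\cosh d(q,l_{\tb}(t))=\cosh\left\|w\right\|\cosh t$, which yields $g''(0)=2\left\|w\right\|\coth\left\|w\right\|>2$, and your implicit-function quotient $-\partial_\theta\eta/\partial_t\eta$ then evaluates to $-\tanh\left\|w\right\|$ rather than $-\left\|w\right\|$. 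So no second-variation or index-form computation will produce the equality you need; your argument only delivers the one-sided bound $\partial_\theta\pi(\tb,w)\geq-\left\|w\right\|$, and you should be aware that the exact value of this derivative is genuinely curvature-dependent. The paper takes a different path that never computes $g''$: it first squeezes $\pi_\theta(w)$ against the Euclidean comparison value $\pi_{l_\theta}(w)=\left\|w\right\|\cos(\arg(w)-\theta)$ via the law of cosines (inequalities (\ref{E1})--(\ref{E2})), and then extracts $\partial_\theta\pi(\tb,w)$, $\partial^{2}_\theta\pi(\tb,w)=0$ and $\partial^{3}_\theta\pi(\tb,w)\geq1$ from one-sided difference quotients and Taylor expansions of both sides of that squeeze; the lower bound $-\left\|w\right\|\leq\partial_\theta\pi$ near $\tb$ then comes from $\tb$ being a local minimum of $\partial_\theta\pi(\cdot,w)$, not from continuity alone.

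A second, independent gap is the uniformity of $\epsilon$ in $w$. Continuity at a fixed $w$ gives an $\epsilon(w)$, but the lemma demands a single $\epsilon$ valid for all $w$, including $\left\|w\right\|\to0$ and $\left\|w\right\|\to\infty$, and the upper bound $\partial_\theta\pi(\theta,w)\leq-\frac{1}{2}\left\|w\right\|$ degenerates as $w\to0$. The paper spends the second half of its proof on exactly this: a continuity estimate for $\left\|w\right\|\geq1$, and for $w=tw_0$ with $\left\|w_0\right\|=1$, $t\in[0,1]$ a compactness/contradiction argument built on the $C^1$ function $H(\theta,t,w_0)=\partial_\theta\pi(\theta,tw_0)$ and the fact that $\partial_tH(\theta_{w_0}^{\bot},0,w_0)=-1$. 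Some such normalization argument is unavoidable; it is not supplied by an appeal to ``Jacobi-field estimates uniform in $\left\|w\right\|$''.
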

\noindent Before proving Lemma \ref{L2M} we will seek to understand the function $\pi(\theta,w)$.\\
\ \\
\noindent Let $\pi_{l_\theta}$ be the orthogonal projection on the line $\l_\theta$ generated by the vector $v_{\theta}$ in $T_{p}\re^{2}$,
in this case, $\pi_{l_\theta}(w)=\left\|w\right\|\cos(arg(w)-\theta)$, where $arg(w)$ is the argument of $w$ with relation to $e_1$ and the positivity of basis $\{e_1,e_2\}$.
\ \\ 
Now using the law of cosines
\begin{center}
$d(p,\pi_\theta(exp_{p} w))^2+d(exp_{p}w,\pi_\theta(exp_{p} w))^2\leq \left\|w\right\|^2=\pi_{l_\theta}(w)^2+d(\pi_{l_{\theta}}(w)v_\theta,w)^2,$
\end{center}

\noindent Since, $K\leq 0$, then $$d(exp_{p}w,\pi_\theta(exp_{p} w))=d(exp_{p}w,exp_{p}\pi_{\theta}(w)v_{\theta})\geq d(w,\pi_{\theta}(w)v_{\theta})\geq d(w,\pi_{l_{\theta}}(w)v_{\theta}).$$
Joining the previous expressions we obtain  
$$d(p,\pi_\theta(exp_{p}w))^2\leq \pi_{l_\theta}(w)^2 \Longleftrightarrow \pi_{\theta}(w)^{2}\leq \pi_{l_{\theta}}(w)^2.$$
\noindent Thus, since $\pi_{\theta}(w)$ has the same sign as $\pi_{l_{\theta}}(w)$, then 
\begin{eqnarray}
 \pi_\theta(w)\geq 0 &\Longrightarrow & \pi_{\theta}(w)\leq \pi_{l_{\theta}}(w)=\left\|w\right\|\cos(arg(w)-\theta); \label{E1}\\
 \pi_{\theta}(w)\leq 0  &\Longrightarrow& \pi_{\theta}(w)\geq \pi_{l_{\theta}}(w)=\left\|w\right\|\cos(arg(w)-\theta). \label{E2}
\end{eqnarray}

\begin{proof}[\bf{Proof of Lemma \ref{L2M}}] \ \\
As $\left\langle w,\theta_{w}^{\bot}\right\rangle=0$, then $arg(w)-\theta_{w}^{\bot}=-\pi/2$, thus $\pi(\theta_{w}^{\bot},w)=0=\left\|w\right\|\cos(-\pi/2)$. Moreover, as $\pi(\theta_{w}^{\bot}-h,w)\geq 0$ and $\pi(\theta_{w}^{\bot}+h,w)\leq 0$ for $h>0$ small, then 
$$\frac{\pi(\theta_{w}^{\bot}-h,w)}{h}\leq \frac{\left\|w\right\|\cos\left(arg(w)-(\theta_{w}^{\bot}-h)\right)}{h}$$
and 
$$\frac{\pi(\theta_{w}^{\bot}+h,w)}{h}\geq \frac{\left\|w\right\|\cos\left(arg(w)-(\theta_{w}^{\bot}+h)\right)}{h}.$$
If $h\to 0$ in the two previous inequalities we have
$$\frac{\partial \pi}{\partial \theta}(\theta_{w}^{\bot},w)\leq -\left\|w\right\|\sin(arg(w)-\theta_{w}^{\bot})=-\left\|w\right\|$$
and 
$$\frac{\partial \pi}{\partial \theta}(\theta_{w}^{\bot},w)\geq -\left\|w\right\|\sin(arg(w)-\theta_{w}^{\bot})=-\left\|w\right\|.$$
Therefore, 
\begin{equation}\label{E3}
\frac{\partial \pi}{\partial \theta}(\theta_{w}^{\bot},w)=-\left\|w\right\|.
\end{equation}

\noindent Moreover, for $h>0$ small and by the equation (\ref{E2}), we have 
\begin{eqnarray*}
\pi(\theta_{w}^{\bot}+h,w)&=&\frac{\partial \pi}{\partial \theta}(\theta_{w}^{\bot},w)h+\frac{1}{2}\frac{\partial^{2} \pi}{\partial^{2} \theta}(\theta_{w}^{\bot},w)h^{2}+r(h)\\
&\geq&\left\|w\right\|\cos\left(arg(w)-(\theta_{w}^{\bot}+h)\right)\\
&=&\left\|w\right\|\left(\frac{\partial}{\partial \theta} \cos(\theta-arg(w))|_{\tb}h+\frac{1}{2}\frac{\partial^{2}}{\partial^{2} \theta} \cos(\theta-arg(w))|_{\tb}h^{2}+R(h)\right).
\end{eqnarray*}
The above inequality and equation (\ref{E3}) implies that $$\frac{\partial^{2} \pi}{\partial^{2} \theta}(\theta_{w}^{\bot},w)h^{2}+r(h)\geq \left\|w\right\|\left(\frac{\partial^{2}}{\partial^{2} \theta} \cos(\theta-arg(w))|_{\tb}h^{2}+R(h)\right).$$
Since $\dfrac{\partial^{2}}{\partial^{2} \theta} \cos(\theta-arg(w))|_{\tb}=0$, then $\dfrac{\partial^{2} \pi}{\partial^{2} \theta}(\theta_{w}^{\bot},w)\geq 0$.
Analogously, using $\pi(\theta_{w}^{\bot}-h,w)$ and equation (\ref{E1}) we have that $\dfrac{\partial^{2} \pi}{\partial^{2} \theta}(\theta_{w}^{\bot},w)\leq 0$. So,
\begin{equation}\label{E4}
\dfrac{\partial^{2} \pi}{\partial^{2} \theta}(\theta_{w}^{\bot},w)=0.
\end{equation}

\noindent Using Taylor's expansion of third order for $\pi(\theta_{w}^{\bot}+h,w)$ and $h>0$, the equations (\ref{E2}), (\ref{E4}), and the fact that 
 $\dfrac{\partial^{3}}{\partial^{3} \theta} \cos(\theta-arg(w))|_{\tb}=1$, implies that 
$$\frac{\partial^{3} \pi}{\partial^{3} \theta}(\theta_{w}^{\bot},w)\frac{h^3}{6}+r_{3}(h)\geq \frac{h^3}{6}+R_{3}(h).$$
Thus,
\begin{equation}\label{E5}
\frac{\partial^{3} \pi}{\partial^{3} \theta}(\theta_{w}^{\bot},w)\geq 1.
\end{equation}

\noindent Equations (\ref{E4}) and (\ref{E5}) implies that, for any $w\in T_{p}\re^2$, the function $\dfrac{\partial \pi}{\partial \theta}(\cdot,w)$ has a minimum in $\theta=\tb$, therefore there is $\epsilon_{1}>0$ such that 
\begin{equation}\label{E6}
-\left\|w\right\|\leq \frac{\partial \pi}{\partial \theta}(\theta,w) \ \ \text{for all} \ \ \left|\theta-\tb\right|<\epsilon_1.
\end{equation}
\noindent The lemma will be proved if we show the following statements:

\begin{enumerate}
		\item There is $\delta_1>0$, such that for all $\left\|w\right\|\geq 1$, $$\frac{\partial \pi}{\partial \theta}(\theta,w)\leq -\frac{1}{2}\left\|w\right\|, \ \ \text{whenever} \left|\theta-\tb\right|<\delta_1.$$
		In fact: Let $1/2>\beta>0$, then by continuity of $\dfrac{\partial \pi}{\partial \theta}$, there is $\delta_1$ such that $$\text{if} \ \ \left|\theta-\tb\right|<\delta_1, \ \ \text{then}\ \  \frac{\partial \pi}{\partial \theta}(\theta,w)-\frac{\partial \pi}{\partial \theta}(\tb,w)<\beta.$$
		Thus, $\dfrac{\partial \pi}{\partial \theta}(\theta,w)<\beta-\left\|w\right\|<-\frac{1}{2}\left\|w\right\|$ for any $\left\|w\right\|\geq 1$.
	  \item There is $\epsilon_2>0$, such that for all $\left\|w\right\|=1$ and $t\in[0,1]$
		$$\frac{\partial \pi}{\partial \theta}(\theta,tw)\leq -\frac{1}{2}t, \ \ \text{whenever}\ \ \left|\theta-\tb\right|<\epsilon_2.$$
		In fact: Suppose by contradiction that for all $n\in \mathbb{N}$, there are $w_n, t_n, \theta_n$, $\left\|w_n\right\|=1$ such that $\left|\theta_{w_n}^{\bot}-\theta_n\right|<\frac{1}{n}$ and $\frac{\partial \pi}{\partial \theta}(\theta_n,t_n w_n)>-\frac{1}{2}t_n$. Without loss of generality, we can assume that $w_n\to w$, $\theta_n \to \tb$ and $t_n\to t$. If $t\neq 0$, the above implies a contradiction with (\ref{E3}). Thus, suppose that $t=0$, then 
		consider the $C^{1}$-function $H(\theta,t,w)=\frac{\partial \pi}{\partial \theta}(\theta,tw)$, then $\frac{\partial H}{\partial t}(\tb,0,w)=-\left\|w\right\|=-1$. Since $H$ is $C^1$, then  
		$$\lim_{n\to \infty}\frac{H(\theta_n,t_n,w_n)}{t_n}=\lim_{t\to 0}\frac{H(\theta,t,w)}{t}=-1<-1/2\leq \lim_{n\to \infty}\frac{H(\theta_n,t_n,w_n)}{t_n}.$$
Which is absurd, so the assertion 2 is proved.	
			
\end{enumerate}

\noindent Take $\epsilon=\min\{\epsilon_1,\epsilon_2,\delta_1\}$, then by the equation (\ref{E6}) and the statements 1 and 2 we have the second part of Lemma \ref{L2M}. The third part is analogous, just consider that $\frac{\partial ^{2}\pi}{\partial^2 \theta}(\tb,w)=0$. So we conclude the proof of Lemma.
\end{proof}
\begin{Le}\label{L3M}
Let $w\neq 0$ and $\theta \neq \tb$, then $\ds\lim_{t \to 0^{+}}\frac{\pi_{\theta}(tw)}{t}\neq 0$.
\end{Le}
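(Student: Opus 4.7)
The plan is to invoke Lemma~\ref{L1M} to turn the one-sided limit into a genuine directional derivative, and then to identify this derivative with the Euclidean inner product $\langle w,v_\theta\rangle$ by squeezing with (\ref{E1})--(\ref{E2}) and evaluating on a single test vector.

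To begin, I note that $\pi(\theta,0)=0$, since the orthogonal projection of $p$ onto any line $l_\theta$ through $p$ is $p$ itself, with parameter $0$. Combined with the differentiability of $\pi(\theta,\cdot)$ at $w=0$ supplied by Lemma~\ref{L1M}, this yields both existence of the limit and the identification
\[
L(w)\;:=\;\lim_{t \to 0^{+}} \frac{\pi(\theta,tw)}{t}\;=\;D_w\pi(\theta,0)[w],
\]
with $L$ a \emph{linear} functional on $T_p\re^2$, being the directional derivative of a differentiable function.

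Next I would extract the shape of $L$ from (\ref{E1})--(\ref{E2}): substituting $tu$ for $w$ there, dividing by $t>0$ and letting $t\to 0^{+}$, one obtains $|L(u)| \leq \|u\||\cos(\arg(u)-\theta)| = |\langle u,v_\theta\rangle|$ for every $u\in T_p\re^2$. Since $T_p\re^2$ is two-dimensional and $L$ is a linear functional dominated by $|\langle\cdot,v_\theta\rangle|$, it must vanish on the one-dimensional kernel of $\langle\cdot,v_\theta\rangle$, so $L(u)=c\langle u,v_\theta\rangle$ for some constant $c$. To pin down $c$, I test $u=v_\theta$: by construction $\exp_p(tv_\theta)\in l_\theta$ carries parameter $t$, so $\pi(\theta,tv_\theta)=t$ identically, which forces $L(v_\theta)=1$ and hence $c=1$. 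Therefore $L(w)=\|w\|\cos(\arg(w)-\theta)$.

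Finally, the hypothesis $\theta\neq\theta_w^\bot$, read in the angular range $(-\pi/2,\pi/2]$ singled out by the Main Theorem, is exactly the condition $\cos(\arg(w)-\theta)\neq 0$, so $L(w)\neq 0$ and the one-sided limit is nonzero as claimed. The step I expect to demand the most care is the passage from the pointwise bound of (\ref{E1})--(\ref{E2}) to the proportionality $L\propto\langle\cdot,v_\theta\rangle$: it rests essentially on the two-dimensionality of $T_p\re^2$, and in higher dimensions one would have to replace it by a direct computation of $D_w\pi(\theta,0)$, say in normal coordinates at $p$.
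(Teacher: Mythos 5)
Your argument is correct, but it takes a genuinely different route from the paper's. The paper argues by contradiction with a purely synthetic computation: assuming the limit vanishes, the law of cosines in the triangle $p$, $w(t)=exp_p(tw)$, $\pi_\theta(w(t))$ forces $d(w(t),\pi_\theta(w(t)))/t\to\left\|w\right\|$ and the angle $\alpha(t)\to 0$, so the perpendicular geodesics from $w(t)$ converge to the geodesic through $p$ with velocity $-w/\left\|w\right\|$; passing the orthogonality relation defining the foot of the projection to the limit (using $d(exp_p)_0=I$) yields $\left\langle w,v_\theta\right\rangle=0$, contradicting $\theta\neq\tb$. You instead compute the limit exactly, as the linear functional $L=D\pi(\theta,\cdot)|_{0}$, pinned down by the squeeze (\ref{E1})--(\ref{E2}) (which forces $L$ to vanish on $\ker\left\langle\cdot,v_\theta\right\rangle$) and the normalization $\pi(\theta,tv_\theta)=t$. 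Your route gives strictly more (the value $\left\langle w,v_\theta\right\rangle$ of the limit, not merely its nonvanishing), at the price of leaning on the $w$-differentiability assertion of Lemma \ref{L1M} at the point $w=0$, which the paper proves only ``analogously'' and where the configuration is slightly degenerate since $q=p$ lies on every line $l_\theta$; you should note explicitly that the implicit-function argument still applies there because $g(t)=d(p,l_\theta(t))^2=t^2$ has $g''=2>0$. Two smaller remarks: your caution that the kernel argument needs two-dimensionality is unfounded --- a linear functional dominated by $\left|\left\langle\cdot,v_\theta\right\rangle\right|$ vanishes on the hyperplane $\ker\left\langle\cdot,v_\theta\right\rangle$ in any dimension and is therefore proportional to it; and the residual case $\theta=\tb+\pi$, where $\left\langle w,v_\theta\right\rangle=0$ even though $\theta\neq\tb$, is an imprecision your proof shares with the paper's, and which both of you resolve the same way, by restricting the angular range in the application.
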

\begin{proof}[\bf{proof}]
Suppose that $\lim_{t \to 0^{+}}\frac{\pi_{\theta}(tw)}{t}=0$, put $w(t)=exp_{p}tw$, let $v(t)\in T_{w(t)}\re^2$ the unit vector such that $exp_{w(t)}s(t)v(t)=\pi_{\theta}(exp_{p}tw)$ for some $s(t)\geq 0$. Let $J(t)\in T_{w(t)}\re^2$ such that $exp_{w(t)}J(t)=p$, that is, $J(t)=-d(exp_{p})_{tw}w$. Then, putting $\alpha(t)$ the oriented angle between $v(t)$ and $J(t)$ (cf. Figure \ref{fig:MF1}).\\

\begin{figure}[htbp]
	\centering
		\includegraphics[width=0.40\textwidth]{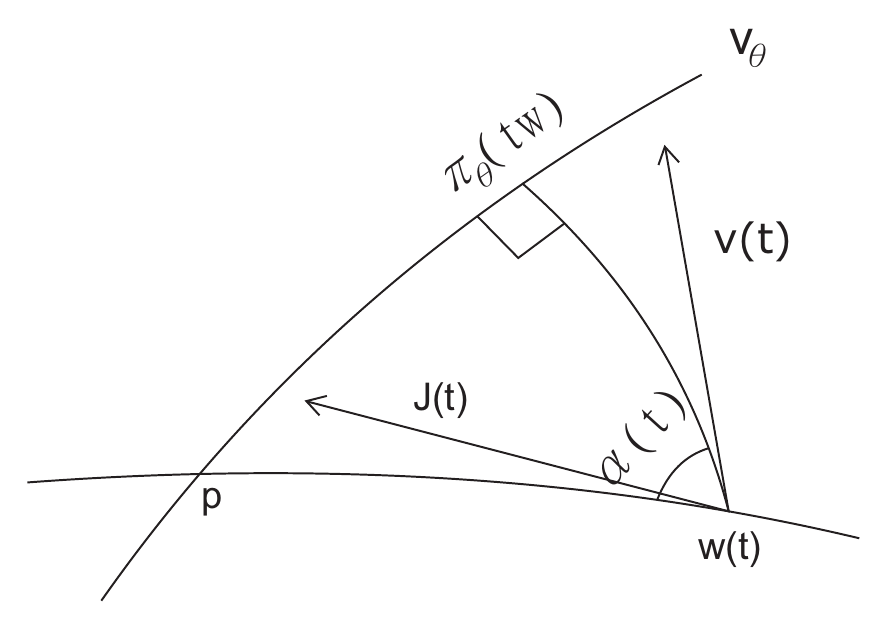}
	\caption{Convergence geodesics}
	\label{fig:MF1}
\end{figure}

\noindent By the law of cosines and using that $d(p,w(t))=\left\|J(t)\right\|=t\left\|w\right\|$ for $t>0$, and $\pi_{\theta}(tw)=d(p,\pi_{\theta}(w(t)))$, we obtain
$$\pi_{\theta}(tw)^{2}\geq \left\|J(t)\right\|^{2}+d(w(t),\pi_{\theta}(w(t)))^{2}-2\left\|J(t)\right\|d(w(t),\pi_{\theta}(w(t)))\cos \alpha(t).$$
Put $\ds\lim_{t\to 0^{+}}\frac{d(w(t),\pi_{\theta}(w(t)))}{t}=B$, then dividing by $t^{2}$ and when $t\to 0$ we have
\begin{eqnarray*}
0=\left(\lim_{t \to 0^{+}}\frac{\pi_{\theta}(tw)}{t}\right)^{2}&\geq& \left\|w\right\|^{2}+B^{2}-2\left\|w\right\|B\lim_{t\to 0^{+}}\cos \alpha(t)\\
&\geq &\left\|w\right\|^{2}+B^{2}-2\left\|w\right\|B=(\left\|w\right\|-B)^{2}\geq 0.\\
\end{eqnarray*}
Thus, we conclude that $B=\left\|w\right\|$ and $\ds\lim_{t\to 0^{+}}\cos \alpha(t)=1$. Therefore, $\ds\lim_{t\to 0^{+}}\alpha(t)=0$,  this implies the following  geodesic convergence 
$$exp_{w(t)}sv(t)\stackrel{t\to 0^{+}}{\longrightarrow} exp_{p}{s\frac{-w}{\left\|w\right\|}},$$
given that $w(t)\to p$ and $v(t)\to -\frac{w}{\left\|w\right\|}$ when $t\to 0^{+}$.\\
Moreover, by definition of $s(t)$, we have that 
$$\left\langle d\left(exp_{w(t)}\right)_{s(t)v(t)}v(t),d\left(exp_{p}\right)_{\pi_{\theta}(tw)v_{\theta}}v_{\theta})\right\rangle_{\pi_{\theta}(tw)v_{\theta}}=0,$$
using the fact that $d(exp_{p})_{0}=I$, where $I$ is the identity of $T_{p}\re^{2}$, then when $t\to 0^{+}$ and we conclude that $\left\langle -\frac{w}{\left\|w\right\|},v_{\theta}\right\rangle=0$ and this is a contradiction as  $\theta\neq \tb$. 
\end{proof}
\ \\
\noindent Now we subdivide $T_{p}\re^2$ in three regions: Consider $\epsilon$ given by the Lemma \ref{L2M}, then 
\begin{eqnarray*}
R_1&=&\left\{w\in T_{p}\re^2\colon \text{the angle} \ \ \angle(w,e_1)\leq \frac{\pi}{2}-\frac{3}{2}\epsilon \ \ \text{and} \ \ \angle(w,e_1)\geq \frac{3\pi}{2}+\frac{3}{2}\epsilon\right\};\\
R_2&=&\left\{w\in T_{p}\re^2\colon \text{the angle} \ \ \frac{\pi}{4}+\frac{3}{2}\epsilon\leq\angle(w,e_1)\leq \frac{5\pi}{4}-\frac{3}{2}\epsilon\right\};\\
R_3&=&\left\{w\in T_{p}\re^2\colon \text{the angle} \ \ \frac{3\pi}{4}+\frac{3}{2}\epsilon\leq\angle(w,e_1)\leq \frac{7\pi}{4}-\frac{3}{2}\epsilon\right\}.
\end{eqnarray*}
\ \\
\noindent For $w\in T_p\re^2$, putting $a_{w}^{\bot}=\tb-\dfrac{\epsilon}{2}$ and $\tilde{a}_w^{\bot}=\tb+\dfrac{\epsilon}{2}$, where $\epsilon$ is given in Lemma \ref{L2M}.

\begin{Le}\label{L4M} For the function $\pi_{\theta}(w)$ we have that 
\begin{enumerate}
	\item There is $C_1>0$ such that for all  $w\in R_1$,
	\begin{itemize}
		\item[$\left(\mathrm{a}\right)$]  If $\left\|w\right\|\leq 1$, then    
	$\pi_{\theta}(w)\geq C_1\left\|w\right\| \ \ \text{for} \ \ \theta\in [0, a_w^{\bot}]\cup [\tilde{a}_w^{\bot},\pi].$
	  \item[$\left(\mathrm{b}\right)$] If $\left\|w\right\|\geq 1$, then 
		$\pi_{\theta}(w)\geq C_{1} \ \ \text{for} \ \ \theta\in [0, a_w^{\bot}]\cup [\tilde{a}_w^{\bot},\pi].$
	\end{itemize}
	\item There is $C_2>0$ such that for all  $w\in R_2$,
	\begin{itemize}
	\item[$\left(\mathrm{a}\right)$]  If $\left\|w\right\|\leq 1$, then  
	$\pi_{\theta}(w)\geq C_2\left\|w\right\| \ \ \text{for} \ \ \theta\in \left[\frac{3}{4}\pi, a_w^{\bot}\right]\cup \left[\tilde{a}_w^{\bot},\frac{7}{4}\pi\right].$
	\item[$\left(\mathrm{b}\right)$] If $\left\|w\right\|\geq 1$, then 
			$\pi_{\theta}(w)\geq C_{2} \ \ \text{for} \ \ \theta\in \left[\frac{3}{4}\pi, a_w^{\bot}\right]\cup \left[\tilde{a}_w^{\bot},\frac{7}{4}\pi\right].$
	\end{itemize}
	\item There is $C_3>0$ such that for all  $w\in R_3$,
	\begin{itemize}
	\item[$\left(\mathrm{a}\right)$]  If $\left\|w\right\|\leq 1$, then  
	$\pi_{\theta}(w)\geq C_3\left\|w\right\| \ \ \text{for} \ \ \theta\in \left[\frac{5}{4}\pi, a_w^{\bot}\right]\cup \left[\tilde{a}_w^{\bot},\frac{9}{4}\pi\right].$
	\item[$\left(\mathrm{b}\right)$] If $\left\|w\right\|\geq 1$, then 
			$\pi_{\theta}(w)\geq C_{3} \ \ \text{for} \ \ \theta\in \left[\frac{5}{4}\pi, a_w^{\bot}\right]\cup \left[\tilde{a}_w^{\bot},\frac{9}{4}\pi\right].$
	\end{itemize}
	
\end{enumerate}
\end{Le}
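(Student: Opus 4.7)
The three numbered cases are structurally identical: $R_{2}$ and $R_{3}$ are rotations of $R_{1}$ in the reference frame, and the corresponding allowed $\theta$-intervals rotate accordingly. I would therefore prove case $1$ in detail and indicate that cases $2$ and $3$ follow by verbatim repetition. Throughout I work on the compact set
\[
K_{1}=\{(\hat w,\theta):\hat w\in R_{1}\cap S^{1},\ \theta\in[0,\ab]\cup[\tilde{a}_{w}^{\bot},\pi]\},
\]
on which the uniform separation $|\theta-\tb|\geq \epsilon/2$ holds, so the Euclidean linear projection $\cos(\arg(\hat w)-\theta)$ is bounded away from zero in absolute value, with a fixed sign on each of the two constituent intervals.

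For part (a) the idea is to rescale. Set
\[
F(t,\hat w,\theta)=\begin{cases}\pi_{\theta}(t\hat w)/t, & t>0,\\ \cos(\arg(\hat w)-\theta), & t=0,\end{cases}
\]
and check that $F$ is continuous on $[0,1]\times K_{1}$. At $t>0$ this is Lemma \ref{L1M}; at $t=0$ it follows from differentiability of $\pi_{\theta}$ at the origin combined with the identification of its linearization with the Euclidean projection $w\mapsto\langle w,v_{\theta}\rangle$, via Gauss's Lemma and the fact that $d(\exp_{p})_{0}$ is the identity; the resulting limit $\lim_{t\to 0^{+}}\pi_{\theta}(t\hat w)/t=\cos(\arg(\hat w)-\theta)$ is nonzero by Lemma \ref{L3M}. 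Since $|F(0,\cdot,\cdot)|\geq\sin(\epsilon/2)>0$ on the compact set $K_{1}$, continuity gives $\delta\in(0,1)$ and $C_{1}'>0$ with $F\geq C_{1}'$ (up to the sign fixed per interval) on $[0,\delta]\times K_{1}$, producing the desired linear bound for $\|w\|\leq\delta$. For $\delta\leq\|w\|\leq 1$, $\pi_{\theta}(w)$ is continuous on a compact set and is nonvanishing there, since $\pi_{\theta}(w)=0$ combined with the implicit-function characterization in Lemma \ref{L1M} would force $\theta=\tb$, contradicting the $\epsilon/2$-gap; a Weierstrass argument produces a bound $C_{1}''>0$. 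Setting $C_{1}=\min(C_{1}',C_{1}'')$ completes (a).

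For part (b) I would reduce to (a) via monotonicity along rays: if $t\mapsto \pi_{\theta}(t\hat w)$ is monotonic on $[0,\infty)$ for every $(\hat w,\theta)\in K_{1}$, then for $\|w\|\geq 1$
\[
\pi_{\theta}(w)=\pi_{\theta}(\|w\|\hat w)\geq \pi_{\theta}(\hat w)\geq C_{1}
\]
by (a) applied to the unit vector $\hat w$. Establishing this monotonicity is the main technical obstacle. My plan is to differentiate in $t$ using the implicit-function setup from the proof of Lemma \ref{L1M} (now in the $w$-variable rather than $\theta$) and to combine the convexity of the distance function $d_{l_{\theta}}$ established in the Corollary of Section \ref{Projections} with the CAT$(0)$ law of cosines from the preliminaries to show that $\tfrac{\partial}{\partial t}\pi_{\theta}(t\hat w)$ never changes sign, matching its initial value $\cos(\arg(\hat w)-\theta)\neq 0$ at $t=0$. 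If this route proves cumbersome, a fallback is a contradiction-compactness argument: failure of (b) produces $(w_{n},\theta_{n})$ with $\|w_{n}\|\geq 1$ and $\pi_{\theta_{n}}(w_{n})\to 0$; a subsequential limit with $\|w_{n}\|$ bounded contradicts the non-vanishing of $\pi_{\theta}(w)$ off $\tb$, while the case $\|w_{n}\|\to\infty$ is ruled out by showing $\pi_{\theta}(w)\to\infty$ uniformly on $K_{1}$ via a CAT$(0)$ comparison on the right triangle with vertices $p$, $\exp_{p}w$, $\pi_{\theta}(\exp_{p}w)$, whose angle at the projection foot is $\pi/2$ by Proposition $1$ part $3$.
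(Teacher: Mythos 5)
Your treatment of part (a) is essentially the paper's own argument: reduce to unit vectors $\hat w$ and a scale parameter $t\in[0,1]$, and extract a uniform constant by compactness, with Lemma \ref{L3M} supplying the nonvanishing at $t=0$. (The paper runs this as a contradiction along sequences rather than as continuity of an extended function $F$; the two are interchangeable. Your identification of $F(0,\hat w,\theta)$ with the Euclidean value $\cos(\arg(\hat w)-\theta)$ is more than is needed -- nonvanishing of the limit plus joint continuity already gives the uniform bound -- but it is correct and arguably cleaner.) For part (b) your primary route, radial monotonicity $\pi_{\theta}(\|w\|\hat w)\geq\pi_{\theta}(\hat w)$ for $\|w\|\geq 1$, is exactly the step the paper invokes (and asserts without proof); you are right to single it out as the crux, and your sketch via sign-constancy of the $t$-derivative is a workable plan (equivalently: the half-spaces $\pi_{\theta}^{-1}([s,\infty))$ and $\pi_{\theta}^{-1}((-\infty,s])$ are convex in a CAT$(0)$ space, hence $\pi_{\theta}\circ\alpha$ is monotone along any geodesic $\alpha$).

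The one genuine flaw is your fallback for part (b). The claim that $\pi_{\theta}(w)\to\infty$ uniformly on the sector as $\|w\|\to\infty$, to be derived from the CAT$(0)$ law of cosines on the right triangle with vertices $p$, $\exp_{p}w$, $\pi_{\theta}(\exp_{p}w)$, cannot work: that comparison reads $\pi_{\theta}(w)^{2}+d(\exp_{p}w,\pi_{\theta}(\exp_{p}w))^{2}\leq\|w\|^{2}$, which bounds $\pi_{\theta}(w)$ from \emph{above}, not below. Worse, the assertion itself is false once the curvature is genuinely negative: in $\mathbb{H}^{2}$ the projection of the ray $t\mapsto\exp_{p}(t\hat w)$ onto the line $l_{\theta}$ converges, as $t\to\infty$, to the foot of the perpendicular dropped from the endpoint at infinity of the ray, so $\pi_{\theta}(\exp_{p}t\hat w)$ remains bounded. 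Ruling out the subcase $\|w_{n}\|\to\infty$ therefore forces you back to the lower bound $\pi_{\theta_{n}}(w_{n})\geq\pi_{\theta_{n}}(w_{n}/\|w_{n}\|)$, i.e.\ to radial monotonicity again. So the fallback is not an independent escape route: part (b) stands or falls with the monotonicity claim, which should be proved explicitly (neither your proposal nor the paper actually does so).
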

\ \\
\noindent We prove the part $1$, the parts $2$ and $3$ are analogous.
\begin{proof}[\bf{proof}]

It suffices to prove that there is $C_1>0$ such that for all  $w\in R_1$ with $\left\|w\right\|=1$ and all $t\in\left[0,1\right]$ we have 
\begin{equation}\label{E6'}
\pi_{\theta}(tw)\geq C_1t \ \ \text{for} \ \ \theta\in [0, a_w^{\bot}]\cup [\tilde{a}_w^{\bot},\pi].
\end{equation}
In fact: By contradiction, suppose that for all $n\in\mathbb{N}$ there is $w_n$ with $\left\|w_n\right\|=1$, $t_n\in [0,1]$ and $\theta_n\in [0, a_{w_n}^{\bot}]\cup [\tilde{a}_{w_{n}}^{\bot},\pi]$ such that $\pi_{\theta_n}(t_nw_n)<\frac{1}{n}t_n$.
We can assume that $w_n\to w$, $\theta_n\to \theta \in [0, a_w^{\bot}]\cup [\tilde{a}_w^{\bot},\pi]$ and $t_n\to t$ when $n\to \infty$.
If $t\neq 0$, then since for $w\in R_1$ and $\theta\in[0, a_w^{\bot}]\cup [\tilde{a}_w^{\bot},\pi]$, $\pi_{\theta}(tw)\geq 0$, we have 
$0\leq \pi_{\theta}(tw)\leq 0$, so $\theta=\tb$ and this is a contradiction, because $\theta\in [0, a_w^{\bot}]\cup [\tilde{a}_w^{\bot},\pi]$ and $\epsilon$ is fixed.\\
If $t=0$, consider the $C^{1}$-function $F(\theta, t,w)=\pi_{\theta}(tw)$, then 
$$0=\lim_{n\to \infty}\frac{F(\theta_n,t_n,w_n)}{t_n}=\lim_{t\to 0}\frac{F(\theta,t,w)}{t},$$

\noindent by Lemma \ref{L3M} we know that $\ds\lim_{t\to 0}\frac{F(\theta,t,w)}{t}\neq 0$, and this is a contradiction with the above, so the affirmation is proved.\\
\ \\
Now, since $\tb=\theta_{tw}^{\bot}$ for $t>0$ we have \\
\textbf{($\mathrm{a}$)} If $\left\|w\right\|\leq1$, then by (\ref{E6'}), $\pi_{\theta}(w)=\pi_{\theta}(\left\|w\right\|\dfrac{w}{\left\|w\right\|})\geq C_1\left\|w\right\|$ for $\theta\in [0, a_w^{\bot}]\cup [\tilde{a}_w^{\bot},\pi]$.\\
\ \\
\textbf{($\mathrm{b}$)} Since $\pi_{\theta}(w)\geq \pi_{\theta}(\frac{w}{\left\|w\right\|})$ for $\left\|w\right\|\geq 1$,  then the equation (\ref{E6'}) and  implies the result.
\end{proof}

\subsection{The Bessel Function Associated to $\pi_{\theta}(w)$}
For $w\in T_{p}\re^{2}$ consider the Bessel function 
$$\tilde{J}_{w}(z)=\int_{0}^{2\pi}\cos(z\pi_{\theta}(w))d\theta.$$
Observe that we can consider $\pi_{\theta}(w)$ as a periodic function in $\theta$ of period  $2\pi$. Moreover, $\tilde{J}_{w}(z)$ has the following properties:
\begin{enumerate}
	\item $\tilde{J}_{w}(z)=\tilde{J}_{w}(-z)$;
	\item $\ds\tilde{J}_{w}(z)=\int_{0}^{2\pi}\cos(z\pi_{\theta}(w))d\theta=\int_{t}^{2\pi+t}\cos(z\pi_{\theta}(w))d\theta$ for any $t\in \re$.
	\item As $\pi_{\theta+\pi}(\exp_p(w))=-\pi_\theta(\exp_p(w))$, then
\begin{eqnarray*}
\ds\int^{\pi+t}_{t}\cos(z\pi_{\theta}w) d\theta&=&\int^{2\pi+t}_{\pi+t}\cos(z\pi_{\theta-\pi}w)d\theta=\int^{2\pi+t}_{\pi+t}\cos(-z\pi_{\theta}w)d\theta\\
&=&\int^{2\pi+t}_{\pi+t}\cos(z\pi_{\theta}(w)d\theta,
\end{eqnarray*}
\end{enumerate}
\noindent Thus, 

\begin{equation}\label{E7}
\tilde{J}_w(z)=\ds2\int^{\pi+t}_{t}\cos(z\pi_\theta(w))d\theta:=2J^{t}_w(z).
\end{equation}
\ \\
\begin{R}\label{R1M}
To fix ideas we consider 
\begin{eqnarray*}
t&=&0 \ \ \text{for} \ \ w\in R_1;\\
t&=&\frac{3}{4}\pi \ \ \text{for} \ \ w\in R_2;\\
t&=&\frac{5}{4}\pi \ \ \text{for} \ \  w\in R_3.\\
\end{eqnarray*}
\end{R}

\begin{Pro}\label{P1M} For any $w\in T_{p}\re^2$ we have that 
$\ds\int^{\infty}_{-\infty}\tilde{J}_{w}(z)dz<\infty$.
\end{Pro}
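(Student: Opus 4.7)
The plan is to exploit the evenness of $\tilde{J}_w$ (property~1) and prove convergence of $\int_{0}^{\infty}\tilde{J}_w(z)\,dz$ by swapping the order of integration. Choose $t$ as in Remark~\ref{R1M} so that $\tb\in(t,\pi+t)$. Using (\ref{E7}) and Fubini's theorem on the compact rectangle $[0,R]\times[t,\pi+t]$, where the integrand is continuous and bounded, we get the key identity
\begin{equation*}
\int_{0}^{R}\tilde{J}_w(z)\,dz \;=\; 2\int_{t}^{\pi+t}\frac{\sin\bigl(R\,\pi_\theta(w)\bigr)}{\pi_\theta(w)}\,d\theta,
\end{equation*}
in which the integrand is continuous at $\theta=\tb$ (where $\sin(Rx)/x\to R$). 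The proposition will follow once we show that the right-hand side has a finite limit as $R\to\infty$.

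Split the $\theta$-integration into the near piece $I_n=[\ab,\tb+\epsilon/2]$ and its complement $I_f$ in $[t,\pi+t]$. On $I_n$, Lemma~\ref{L2M} ensures that $\theta\mapsto\pi_\theta(w)$ is a $C^\infty$-diffeomorphism onto an interval $[u_-,u_+]$ (with $u_-<0<u_+$) whose derivative stays in $[-\|w\|,-\|w\|/2]$. The substitution $u=\pi_\theta(w)$ converts the near-piece integral into
\begin{equation*}
\int_{u_-}^{u_+}\frac{\sin(Ru)}{u}\,\Phi(u)\,du,
\end{equation*}
where $\Phi$ is smooth, bounded, and satisfies $\Phi(0)=1/\|w\|$. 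Writing $\Phi(u)=\Phi(0)+u\Psi(u)$ (valid since $\Phi$ is $C^{1}$), the first term tends to $\pi\Phi(0)=\pi/\|w\|$ by the classical Dirichlet integral, while $\int\sin(Ru)\Psi(u)\,du\to 0$ by the Riemann--Lebesgue lemma.

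On $I_f$, Lemma~\ref{L4M} (applied to whichever region $R_i$ contains $w$, per Remark~\ref{R1M}) provides $|\pi_\theta(w)|\geq c>0$, so $g(\theta):=1/\pi_\theta(w)$ is continuous and bounded, and the far-piece integral reduces to $\int_{I_f}g(\theta)\sin\bigl(R\,\pi_\theta(w)\bigr)\,d\theta$. This tends to $0$ as $R\to\infty$ by the generalized Riemann--Lebesgue lemma: the smooth, non-constant phase $\pi_{\cdot}(w)$ has only finitely many critical points on $I_f$, each contributing an integrable singularity under a piecewise monotone change of variable. Combining the two pieces gives $\int_0^R\tilde{J}_w(z)\,dz\to 2\pi/\|w\|$, and by evenness $\int_{-\infty}^{\infty}\tilde{J}_w(z)\,dz=4\pi/\|w\|<\infty$, matching the Euclidean value $2\pi\cdot\int_{-\infty}^{\infty}J_0(z\|w\|)\,dz$. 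The main obstacle is the far piece: although $|\pi_\theta(w)|$ is bounded away from zero, the phase $\pi_\theta(w)$ can have critical points in $I_f$ (the geodesic analogues of $\theta=\mathrm{arg}(w)$ and $\mathrm{arg}(w)+\pi$ in the Euclidean case), which block a direct integration by parts and force the use of a stationary-phase or van der Corput argument.
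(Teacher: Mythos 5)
Your overall strategy coincides with the paper's: apply Fubini to turn $\int_0^R\tilde J_w$ into the Dirichlet-type integral $2\int_t^{\pi+t}\frac{\sin(R\,\pi_\theta(w))}{\pi_\theta(w)}\,d\theta$, split $\theta$ into a neighbourhood of the zero $\tb$ of the phase and its complement, use Lemma~\ref{L4M} to bound $\pi_\theta(w)$ from below away from $\tb$, and use the derivative bounds of Lemma~\ref{L2M} to change variables $u=\pi_\theta(w)$ near $\tb$. Where you differ is in how the near piece is finished: you invoke the classical Dirichlet integral plus Riemann--Lebesgue after writing $\Phi(u)=\Phi(0)+u\Psi(u)$, whereas the paper estimates $\int\frac{\sin(xs)}{s\,g_w(s)}ds$ by hand, period by period, telescoping over the intervals $[2\pi k/x,\,2\pi(k+1)/x]$ and summing $\sum 1/k^2$. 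Your version is cleaner and, unlike the paper's, actually yields convergence of the near piece rather than just a uniform bound on the partial integrals; the paper's version has the advantage of producing explicit constants with the dependence on $\|w\|$ (namely $O(1/\|w\|)+O(1)$) that is later needed to integrate against $d\mu\,d\mu$ in the Main Theorem.

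The one genuine gap is your treatment of the far piece. You claim $\int_{I_f}\frac{\sin(R\,\pi_\theta(w))}{\pi_\theta(w)}\,d\theta\to0$ because the phase "has only finitely many critical points on $I_f$", each tamed by a piecewise monotone substitution or a van der Corput estimate. Nothing in the paper supplies this: Lemmas~\ref{L1M}--\ref{L4M} control $\partial\pi/\partial\theta$ only in the $\epsilon$-window around $\tb$, and for a general nonpositively curved metric you have no finiteness or nondegeneracy statement for the critical points of $\theta\mapsto\pi_\theta(w)$ on $I_f$ (a merely $C^\infty$ phase could in principle have infinitely many, or degenerate, critical points). So the asserted limit $4\pi/\|w\|$ is not established. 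Fortunately this extra claim is not needed for the Proposition as the paper uses it: on $I_f$ Lemma~\ref{L4M} gives $\pi_\theta(w)\geq C_i\|w\|$ (or $C_i$), so $|\sin|\leq1$ already yields the uniform bound $|I_f|/(C_i\|w\|)$ of (\ref{E9}), and a uniform bound on $\bigl|\int_0^R\tilde J_w\bigr|$ is all that enters the proof of the Main Theorem. If you drop the exact-limit claim and keep only boundedness of the far piece, your argument is complete and matches the paper's in strength.
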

\begin{proof}[\bf{proof}]
We divide the proof in three parts.
\begin{enumerate}
	\item If $w\in R_1$, in this case, by Remark \ref{R1M} and equation (\ref{E7}) is it suffices to prove the Lemma for $J_{w}^{0}(z):=J_{w}(z)$.
   
	\item If $w\in R_2$, in this case, by Remark \ref{R1M} and equation (\ref{E7}) is it suffices to prove the Lemma for $J_{w}^{{3\pi}/{4}}(z)$.
	
	\item If $w\in R_3$, in this case, by Remark \ref{R1M} and equation (\ref{E7}) is it suffices to prove the Lemma for $J_{w}^{{5\pi}/{4}}(z)$.
\end{enumerate}
We will prove 1, the proof of 2 and 3 are analogous. In fact: Since $J_{w}(z)=J_{w}(-z)$, then 
$$\ds\int^{\infty}_{-\infty}{J}_{w}(z)dz=2\ds\int^{\infty}_{0}{J}_{w}(z)dz,$$
so, the proof is reduced to prove that $\ds\int^{\infty}_{0}{J}_{w}(z)dz<\infty.$\\
Let $w\in R_1$ and $x>0$, then 
\begin{eqnarray*}
\ds\int^{x}_{0}{J}_w(z)dz&=&\int^{\pi}_{0}\int^{x}_{0}\cos(z\pi_\theta(w))dzd\theta=\int^{\pi}_{0}\frac{\sin(x\pi_\theta(w))}{\pi_\theta(w)}d\theta\\
&=&\int^{\tb}_{0}\frac{\sin(x\pi_\theta(w))}{\pi_\theta(w)}d\theta+\int^{\pi}_{\tb}\frac{\sin(x\pi_\theta(w))}{\pi_\theta(w)}d\theta:=I^{1}_{w}(x)+I^{2}_{w}(x).
\end{eqnarray*}
The next step is to estimate $I^{1}_{w}(x)$ and $I^{2}_{w}(x)$.\\
\begin{equation}\label{E8}
\ds I^{1}_{w}(x)=\int^{a_{w}^{\bot}}_{0}\frac{\sin(x\pi_\theta(w))}{\pi_\theta(w)}d\theta+\int^{\tb}_{a_{w}^{\bot}}\frac{\sin(x\pi_\theta(w))}{\pi_\theta(w)}d\theta,
\end{equation}
where $a^{\bot}_{w}=\tb-\epsilon$.

\noindent Now, by Lemma \ref{L4M}.1 we have that for $\theta\in [0,a_w^{\bot}]$ and $\left\|w\right\|\leq 1$, then  $\pi_{\theta}(w)\geq C_1\left\|w\right\|$ and for $\left\|w\right\|\geq 1$, $\pi_{\theta}(w)\geq C_1$.\\ 
\noindent Since, $\sin(x\pi_\theta(w))\leq 1$, then the first integral on the right side  (\ref{E8}) is bounded in $x$. In fact:
\begin{equation}\label{E9}
 \int^{a_{w}^{\bot}}_{0}\frac{\sin(x\pi_\theta(w))}{\pi_\theta(w)}d\theta \leq \left\{ \begin{array}{lll}
         \frac{\pi}{C_1\left\|w\right\|}& \mbox{\text{if}\ \ $0<\left\|w\right\|\leq 1$};\\
          & \\ 
       \frac{\pi}{C_1}& \mbox{\text{if} \ \ $\left\|w\right\|>1$. \ 
        }\end{array} \right.
\end{equation} 
					
\noindent 	Now we estimate the second integral on the right side of (\ref{E8}).\\
\ \\
Put $f_w(\theta)=\pi_\theta(w)$, then $f_{w}(\tb)=0$ and $f_{w}(\theta)>0$ for $\theta<\tb$. Moreover, recall that by Lemma \ref{L2M},  $\frac{\partial \pi}{\partial \theta}(\tb,w)=-\left\|w\right\|\neq 0$, then $f_w'(\tb)\neq 0$, and put $s=f_{w}(\theta)$. Thus,

\begin{equation}\label{E10}
\ds\int^{\tb}_{a_w^{\bot}}\frac{\sin(x\pi_\theta(w))}{\pi_\theta(w)}d\theta=-\int^{f_{w}(a_w^{\bot})}_{0}\frac{\sin \left(xs\right)}{sf'_{w}(f_w^{-1}(s))}ds=
-\int^{f_w(a_{w}^{\bot})}_{0}\frac{\sin \left(xs\right)}{sg_w(s)}ds,
\end{equation}
where $g_w(s)=f'_w(f_w^{-1}(s))$ is  $C^\infty$.\\
\ \\
Now by definition of $s$, if $s\in [0,f_{w}(a_{w}^{\bot})]$, then $f_{w}^{-1}(s)\in [a_w^{\bot},\tb]$. Thus by Lemma \ref{L2M} we have 

\begin{equation}\label{E11}
-\left\|w\right\|\leq g_w(s)\leq-\frac{1}{2}\left\|w\right\| \ \ \text{for all} \ \ s\in[0,f_{w}(a_{w}^{\bot})].
\end{equation}
For large $x$ 
\begin{center}
$\ds-\int^{\frac{2\pi}{x}}_{0}\frac{\sin \left(xs\right)}{sg_\alpha(s)}ds=-\int^{\frac{\pi}{x}}_{0}\frac{\sin \left(xs\right)}{sg_\alpha(s)}ds-\int^{2\pi/x}_{\pi/x}\frac{\sin \left(xs\right)}{sg_\alpha(s)}ds$
\end{center}
Since  $\sin\left(xs\right)\geq0$ in $\left[0,\frac{\pi}{x}\right]$ then  
\begin{equation}\label{E12}
\ds-\int^{\frac{\pi}{x}}_{0}\frac{\sin \left(xs\right)}{sg_\alpha(s)}ds\leq\frac{2}{\left\|w\right\|}\int^{\frac{\pi}{x}}_{0}\frac{\sin \left(xs\right)}{s}ds=\frac{2}{\left\|w\right\|}\int^{\pi}_{0}\frac{\sin y}{y}dy.
\end{equation}

\noindent As well $-sin\left(xs\right)\geq0$ for $s\in\left[\frac{\pi}{x},\frac{2\pi}{x}\right]$, then $\ds-\int^{2\pi/x}_{\pi/x}\frac{\sin\left(xs\right)}{sg_w(s)}ds\leq 0$. So, by (\ref{E12})
\begin{equation}\label{E13}
\ds-\int^{\frac{2\pi}{x}}_{0}\frac{\sin \left(xs\right)}{sg_w(s)}ds\leq \frac{2}{\left\|w\right\|}\int^{\pi}_{0}\frac{\sin y}{y}dy.
\end{equation}
\ \\
Let $n\in \mathbb{N}$ such that $\ds n\leq \frac{xf_w(a_w^{\bot})}{2\pi}\leq n+1$, then 
\begin{center}
$\ds\int^{f_w(a_w^{\bot})}_{0}\frac{\sin \left(xs\right)}{sg_w(s)}ds=\int^{\frac{2\pi}{x}}_{0}\frac{\sin \left(xs\right)}{sg_w(s)}ds+\sum^{k=n-1}_{k=1}\int^{\frac{2\pi(k+1)}{x}}_{\frac{2\pi k}{x}}\frac{\sin \left(xs\right)}{sg_w(s)}ds+\int^{f_w(a_w^{\bot})}_{\frac{2\pi n}{x}}\frac{\sin \left(xs\right)}{sg_w(s)}ds.$
\end{center}
If $\frac{2\pi n}{x}\leq f_w(a_w^{\bot})\leq \frac{\pi(2n+1)}{x}$, then $\sin\left(xs\right)\geq 0$ and by Lemma \ref{L2M}, we have  
\ \\
$$\ds \frac{\sin\left(xs\right)}{s\left\|w\right\|}\leq -\frac{\sin \left(xs\right)}{sg_w(s)}\leq \frac{2\sin\left(xs\right)}{s\left\|w\right\|} \ \ \text{and} \ \ \ds \frac{2\sin\left(xs\right)}{s\left\|w\right\|} \leq \frac{2x \sin \left(xs\right)}{\left\|w\right\|2\pi n}.$$
This implies 
\begin{eqnarray*}
\ds-\int^{f_w(a_w^{\bot})}_{\frac{2\pi n }{x}}\frac{\sin \left(xs\right)}{sg_w(s)}ds&\leq& \int^{f_w(a_w^{\bot})}_{\frac{2\pi n}{x}}\frac{x \sin \left(xs\right)}{\left\|w\right\|\pi n}ds\leq\frac{x}{\left\|w\right\|\pi n}\int^{f_w(a_w^{\bot})}_{\frac{2\pi n}{x}}{\sin \left(xs\right)}ds\\
&\leq& \frac{x}{\left\|w\right\| \pi n}\left(f_w\left(a_w^{\bot}\right)-\frac{2\pi n}{x}\right)=\frac{2}{\left\|w\right\|}\left(\frac{xf_w(a_w^{\bot})}{2\pi n}-1\right) \\
&\leq& \frac{2}{\left\|w\right\|}\left(\frac{2\pi(n+1)}{2\pi n}-1\right)
=\frac{2}{\left\|w\right\|}\frac{1}{n}.
\end{eqnarray*}
In the case that $f_w(\ab)\geq \frac{\pi(2n+1)}{x}$, then $\ds-\int^{f_w(\ab)}_{\frac{\pi(2n+1)}{x}}\frac{\sin \left(xs\right)}{sg_w(s)}ds\leq 0$, so 
\begin{eqnarray*}
\ds-\int^{f_w(\ab)}_{\frac{2\pi n}{x}}\frac{\sin \left(xs\right)}{sg_w(s)}ds&\leq& -\int^{\frac{\pi(2n+1)}{x}}_{\frac{\pi (2n+1)}{x}}\frac{\sin \left(xs\right)}{sg_w(s)}ds\leq \frac{x}{\left\|w\right\|2\pi n }\left(\frac{\pi(2n+1)}{x}-\frac{2\pi n}{x}\right)\\
&=&\frac{1}{\left\|w\right\|}\frac{1}{n}.
\end{eqnarray*}
In any case, we have 
\begin{equation}\label{E14}
\ds\int^{f_w(\ab)}_{\frac{2\pi n}{x}}\frac{\sin \left(xs\right)}{sg_w(s)}ds \leq \frac{2}{\left\|w\right\|}\frac{1}{n}.
\end{equation}
\ \\
Now we only need to estimate $\ds\sum^{k=n-1}_{k=1}\int^{\frac{2\pi(k+1)}{x}}_{\frac{2\pi k}{x}}\frac{\sin \left(xs\right)}{sg_w(s)}ds$.\\
Put $s_0=\frac{2\pi k}{x}$, then 
\begin{center}
$\ds\int^{\frac{2\pi (k+1)}{x}}_{\frac{2\pi k}{x}}\frac{\sin\left(xs\right)}{sg_w(s)}ds=\int^{\frac{2\pi (k+1)}{x}}_{\frac{2\pi k}{x}}\frac{\sin\left(xs\right)}{s_0g_w(s_0)}ds+\int^{\frac{2\pi (k+1)}{x}}_{\frac{2\pi k}{x}} \sin \left(xs\right) \left(\frac{1}{sg_w(s)}-\frac{1}{s_0g_w(s_0)}\right)ds.$
\end{center}
\ \\
The first integral on the right term of the above equality is zero.\\
\ \\
Now we estimate the second integral on the right side of the above equation.\\
\ \\
By Lemma \ref{L2M} we have $g_w(s)g_w(s_0)>\dfrac{\left\|w\right\|^{2}}{4}$, also $ss_0\geq \left(\frac{2\pi k}{x}\right)^2$.\\
Thus, $\ds \frac{1}{ss_0g_w(s)g_w(s_0)}<\frac{1}{\left\|w\right\|^2}\frac{x^2}{\pi^2k^2}$. Moreover,
\ \\
\begin{eqnarray*}
\left|s_0g_w(s_0)-sg_w(s)\right|&=&\left|(s_0-s)g_w(s_0)+s(g_w(s_0)-g_w(s))\right|\\
&\leq&\left|s_0-s\right|\left|g_w(s_0)\right|+s\left|g_w(s)-g_w(s_0)\right|\\ 
&\leq& \left|s-s_0\right|\left(\left|g_w(s_0)\right|+s \sup_{s\in [0,f_w(\ab)]}\left|g'_w(s)\right|\right) \ \ \downarrow \ \ \text{by Lemma \ref{L2M}} \ \ \\
&\leq& \frac{2\pi}{x}\left(\left\|w\right\|+\frac{2\pi(k+1)}{x}\left\|w\right\|\right)\\
&\leq& \frac{2\pi}{x}\left\|w\right\|\left(1+\frac{2\pi n}{x}\right)\\
&\leq& \frac{2\pi}{x}\left\|w\right\|\left(1+f_{w}(\ab)\right) \\
&\leq& \frac{2\pi}{x}\left\|w\right\|\left(1+\left\|w\right\|\right)
\end{eqnarray*}
as, $f_{w}(\ab)\leq \left\|w\right\|$. 
Therefore, 
\begin{center}
$\ds \left|\frac{1}{sg_w(s)}-\frac{1}{s_0g_w(s_0)}\right|=\left|\frac{sg_w(s)-s_0g_w(s_0)}{ss_0g_w(s)g_w(s_0)}\right|\leq \frac{2\left(1+\left\|w\right\|\right)}{\pi\left\|w\right\|}\left(\frac{x}{k^2}\right).$
\end{center}
Then, 
\begin{center}
$\ds\left|\int^{\frac{2\pi (k+1)}{x}}_{\frac{2\pi k}{x}}\frac{\sin\left(xs\right)}{sg_w(s)}ds\right|\leq \frac{2\left(1+\left\|w\right\|\right)}{\pi\left\|w\right\|}\left(\frac{x}{k^2}\right)\left(\frac{2\pi(k+1)}{x}-\frac{2\pi k}{x}\right)=\frac{4\left(1+\left\|w\right\|\right)}{\left\|w\right\|}\left(\frac{1}{k^2}\right)$.
\end{center}
Therefore,
\begin{equation}\label{E15}
\ds\left|\sum^{k=n-1}_{k=1}\int^{\frac{2\pi (k+1)}{x}}_{\frac{2\pi k}{x}}\frac{\sin\left(xs\right)}{sg_w(s)}ds\right|\leq \sum^{k=n-1}_{k=1}\left|\int^{\frac{2\pi (k+1)}{x}}_{\frac{2\pi k}{x}}\frac{\sin\left(xs\right)}{sg_w(s)}ds\right|\leq {A(\left\|w\right\|)}\sum^{k=n-1}_{k=1}\frac{1}{k^2},
\end{equation}
where $A(\left\|w\right\|)=\dfrac{4\left(1+\left\|w\right\|\right)}{\left\|w\right\|}$.\\
\ \\
Since $\sum^{\infty}_{k=1}\frac{1}{k^2}:=a<\infty$ and put $b=\int^{\pi}_{0}\frac{\sin y}{y}$, then the equations (\ref{E13}), (\ref{E14}), and (\ref{E15}) imply
\begin{equation}\label{E16}
-\int^{f_w(\ab)}_{0}\frac{\sin(xs)}{sg_w(s)}ds\leq \frac{2}{\left\|w\right\|}b+\frac{2}{\left\|w\right\|}\frac{1}{n}+A(\left\|w\right\|)a.
\end{equation}

\noindent Thus, by the equation (\ref{E8}), (\ref{E9}), (\ref{E10}), and (\ref{E16}) we have

\begin{equation}\label{E17}
  I^{1}_{w}(x) \leq \left\{ \begin{array}{lll}
         \ds\frac{\pi}{C_1\left\|w\right\|}+\frac{2}{\left\|w\right\|}b+\frac{2}{\left\|w\right\|}\frac{1}{n}+A(\left\|w\right\|)a & \mbox{\text{if}\ \ $0<\left\|w\right\|\leq 1$};\\
          & \\ 
       \ds\frac{\pi}{C_1}+\frac{2}{\left\|w\right\|}b+\frac{2}{\left\|w\right\|}\frac{1}{n}+A(\left\|w\right\|)a& \mbox{\text{if} \ \ $\left\|w\right\|>1$. \ 
        }\end{array} \right. 
\end{equation} 

\noindent Completely analogous using $\tilde{a}_{w}^{\bot}$ instead of $\ab$ and  taking $n'$ such that \\
$-\frac{\pi(2n'+1)}{x}\leq f_w(\tilde{a}_w^{\bot})\leq -\frac{2\pi n'}{x}$, we also obtain
				
\begin{equation}\label{E18}
  I^{2}_{w}(x) \leq \left\{ \begin{array}{lll}
         \ds\frac{\pi}{C_1\left\|w\right\|}+\frac{2}{\left\|w\right\|}b+\frac{2}{\left\|w\right\|}\frac{1}{n'}+A(\left\|w\right\|)a & \mbox{\text{if}\ \ $0<\left\|w\right\|\leq 1$};\\
          & \\ 
       \ds\frac{\pi}{C_1}+\frac{2}{\left\|w\right\|}b+\frac{2}{\left\|w\right\|}\frac{1}{n'}+A(\left\|w\right\|)a& \mbox{\text{if} \ \ $\left\|w\right\|>1$. \ 
        }\end{array} \right. 
\end{equation} 
Since $n,n'\to \infty$ as $x\to\infty$, then (\ref{E17}) and (\ref{E18}) implies  

\begin{equation}\label{E19}
  \int^{\infty}_{0}J_w(z)dz\leq \left\{ \begin{array}{lll}
         \ds\frac{2\pi}{C_1\left\|w\right\|}+\frac{4}{\left\|w\right\|}b+2A(\left\|w\right\|)a & \mbox{\text{if}\ \ $0<\left\|w\right\|\leq 1$};\\
          & \\ 
       \ds\frac{2\pi}{C_1}+\frac{2}{\left\|w\right\|}b+2A(\left\|w\right\|)a& \mbox{\text{if} \ \ $\left\|w\right\|>1$. \ 
        }\end{array} \right. 
\end{equation} 
\noindent Thus, we conclude the proof of Proposition \ref{P1M}.
\end{proof}

\noindent Put $j_1=0$, $j_2=\dfrac{3\pi}{4}$ and $j_3=\dfrac{5\pi}{4}$, then it is also easy to see that for $w\in R_i$, 
\begin{equation}\label{E20}
  \int^{\infty}_{0}J^{j_i}_w(z)dz\leq \left\{ \begin{array}{lll}
         \ds\frac{2\pi}{C_i\left\|w\right\|}+\frac{4}{\left\|w\right\|}b+2A(\left\|w\right\|)a & \mbox{\text{if}\ \ $0<\left\|w\right\|\leq 1$};\\
          & \\ 
       \ds\frac{2\pi}{C_i}+\frac{2}{\left\|w\right\|}b+2A(\left\|w\right\|)a& \mbox{\text{if} \ \ $\left\|w\right\|>1$, \ 
        }\end{array} \right. 
	\end{equation}			
$i=1,2,3$, where $C_i$ are given in Lemma \ref{L4M}.
\section{Proof of the Main Theorem.}
\noindent As in the Kaufman's proof of Marstrand's theorem  (cf. \cite{Kaufman}), we use the potential theory.\\
Put $d=HD(K)>1$, assume that $0<M_d(K)<\infty$ and for some $C>0$, we have 
\begin{center}
$m_d(K\cap B_r(x))\leq Cr^d$
\end{center}
for $x\in \mathbb{R}^2$ and $0<r\leq 1$ (cf. \cite{Falconer}). Let $\mu$ be the finite measure on $\mathbb{R}^2$ defined by $\mu(A)=m_d(K \cap A)$, $A$ a measurable subset of $\mathbb{R}^2$. For $-\frac{\pi}{2}<\theta<\frac{\pi}{2}$, let us denote by $\mu_\theta$ the (unique) measure on $\mathbb{R}$ such that $\int {fd\mu_\theta}=\int{(f\circ\pi_\theta)}d\mu$ for every continuous function $f$. The theorem will follow, if we show that the support of $\mu_\theta$ has positive Lebesgue measure for almost all $\theta\in(-\frac{\pi}{2},\frac{\pi}{2})$, since this support is clearly contained in $\pi_\theta(K)$. To do this we use the following fact.\\
\begin{Le}\label{L5M}$\left(\emph{cf. \cite[pg. 65]{PT}}\right)$
Let $\eta$ be a finite measure with compact support on $\mathbb{R}$ and $$\hat{\eta}(p)=\frac{1}{\sqrt{2\pi}}\int^{\infty}_{-\infty}e^{-ixp}d\eta(x),$$ for $p\in\mathbb{R}$ $\left(\hat{\eta} \ \ \text{is the fourier transform of} \ \ \eta\right)$. If $0<\int^{\infty}_{-\infty}|\hat{\eta}(p)|^2dp<\infty$ then the support of $\eta$ has positive Lebesgue measure.
\end{Le}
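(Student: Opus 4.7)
The plan is to use Plancherel's theorem to upgrade the $L^2$ integrability of $\hat{\eta}$ into the fact that $\eta$ is absolutely continuous with respect to Lebesgue measure with an $L^2$ density, and then deduce the conclusion from the nonvanishing of $\hat\eta$.

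First, the hypothesis $\int_{-\infty}^{\infty}|\hat{\eta}(p)|^{2}\,dp<\infty$ says precisely that $\hat{\eta}\in L^{2}(\mathbb{R})$. Since the Fourier transform (with the $1/\sqrt{2\pi}$ normalization used in the statement) is a unitary isomorphism of $L^{2}(\mathbb{R})$, there exists a function $f\in L^{2}(\mathbb{R})$ whose Plancherel--Fourier transform equals $\hat{\eta}$. Because $\eta$ is a finite, compactly supported measure, both $\eta$ and the measure $f(x)\,dx$ are tempered distributions on $\mathbb{R}$, and by construction they have the same distributional Fourier transform.

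Next, by the injectivity of the Fourier transform on tempered distributions, $\eta=f\,dx$. Concretely, for every Schwartz test function $\varphi$, Parseval's formula gives
\[
\int \varphi\,d\eta \;=\; \int \hat{\varphi}(p)\,\overline{\hat{\eta}(p)}\,dp \;=\; \int \hat{\varphi}(p)\,\overline{\hat{f}(p)}\,dp \;=\; \int \varphi(x)\,f(x)\,dx,
\]
so $\eta$ is absolutely continuous with $L^{2}$-density $f$.

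Finally, assume for contradiction that $\mathrm{supp}\,\eta$ has Lebesgue measure zero. Since $f$ vanishes Lebesgue-almost everywhere outside $\mathrm{supp}\,\eta$ (as $f\,dx=\eta$ is supported there), we would have $f=0$ almost everywhere on $\mathbb{R}$. This forces $\eta=0$ and hence $\hat{\eta}\equiv 0$, contradicting $\int|\hat{\eta}(p)|^{2}\,dp>0$. Therefore $\mathrm{supp}\,\eta$ must have positive Lebesgue measure. The only subtle step is the identification $\eta=f\,dx$ through Fourier uniqueness on tempered distributions; once that is in hand, the conclusion is immediate.
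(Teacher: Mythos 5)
Your argument is correct: it is the standard Plancherel proof of this lemma. Note that the paper does not prove this statement at all --- it only cites \cite[pg.~65]{PT} --- so there is no internal proof to compare against; your identification $\eta=f\,dx$ with $f\in L^{2}$ is exactly what the paper implicitly relies on at the end of the Main Theorem's proof, where it writes $d\mu_{\theta}=\varphi\,dx$ with $\varphi$ the inverse transform of $\hat{\mu}_{\theta}$. The only cosmetic point is that dropping the conjugate in $\int\hat{\varphi}\,\overline{\hat{f}}=\int\varphi f$ uses that $f$ is real-valued, which follows from the Hermitian symmetry $\hat{\eta}(-p)=\overline{\hat{\eta}(p)}$ of the transform of a (real) measure.
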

\begin{proof}[\bf{Proof of the Main Theorem}] \ \\
We now show that, for almost any $\theta\in\left(-\frac{\pi}{2},\frac{\pi}{2}\right)$, $\hat{\mu}_\theta$ is square-integrable. From the definitions we have 
\begin{center}
$|\hat{\mu}_\theta(p)|^2=\ds\frac{1}{2\pi}\int \int e^{i(y-x)p}d\mu_\theta(x)d\mu_\theta(y)=\frac{1}{2\pi}\int\int e^{ip(\pi_\theta(v)-\pi_\theta(u))}d\mu(u)d\mu(v)$
\end{center}
as $\pi_{\theta+\pi}(u)=-\pi_\theta(u)$, then
\begin{center}
$|\hat{\mu}_\theta(p)|^2+|\hat{\mu}_{\theta+\pi}(p)|^2=\ds \frac{1}{\pi}\int\int \cos(p(\pi_\theta(v)-\pi_\theta(u)))d\mu(u)d\mu(v)$.
\end{center}
And so 
\begin{eqnarray*}
\ds\int^{2\pi}_{0}|\hat{\mu}_\theta(p)|^2d\theta&=&\frac{1}{2\pi}\int^{2\pi}_{0}\int\int cos(p(\pi_\theta(v)-\pi_\theta(u)))d\mu(u)d\mu(v)d\theta\\
&=&\frac{1}{2\pi}\int\int\left(\int^{2\pi}_{0} cos(p(\pi_\theta(v)-\pi_\theta(u)))d\theta \right)d\mu(u)d\mu(v).
\end{eqnarray*}
Observe now that for all $x>0$ and for all $u,v$ there are $L\in \mathbb{N}$ and $w(u,v)$ such that
\begin{center}
$\ds\int^{x}_{0}\int^{2\pi}_{0}\cos(p(\pi_{\theta}(u)-\pi_{\theta}(v)))d\theta dp 
\leq L \left|\int^{x}_{0}\int^{2\pi}_{0}\cos(p\pi_{\theta}(w(u,v))d\theta dp\right|$
\end{center}
$w(u,v)$ can be taken such that $d(p,w)=d(u,v)$.
So, we have for $x>0$
\begin{center}
$\ds \int^{x}_{-x}\int^{2\pi}_{0}|\hat{\mu}_\theta(p)|^2d\theta dp \leq \frac{2L}{2\pi}\int\int\left|\int^{x}_{0}\tilde{J}_{w(u,v)}(p)dp\right|d\mu(u)d\mu(v)$.
\end{center}
Follows
\begin{equation*}
\ds \frac{\pi}{L}\int^{\infty}_{-\infty}\int^{2\pi}_{0}|\hat{\mu}_\theta(p)|^2d\theta dp\leq\int\int\left|\int^{\infty}_{0}\tilde{J}_{w(u,v)}(p)dp\right|d\mu(u)d\mu(v)=
\end{equation*}
\begin{equation*}
 =\int\int_{\{\left\|w\right\|>1\}}\left|\int^{\infty}_{0}\tilde{J}_{w(u,v)}(p)dp\right|d\mu(u)d\mu(v)+\int \int_{\left\{\left\|w\right\|\leq1\right\}}\left|\int^{\infty}_{0}\tilde{J}_{w(u,v)}(p)dp\right|d\mu(u)d\mu(v)\\
\end{equation*}
\begin{equation}\label{E21}
=:I+II.
\end{equation}
By (\ref{E7}) and Remark \ref{R1M}
\begin{eqnarray*}
I&=&{\int\int}_{\left\{\left\|w\right\|>1\right\}}\left|\int^{\infty}_{0}\tilde{J}_{w}(p)dp\right|d\mu(u)d\mu(v)=\sum_{i=1}^{3} \int\int_{\{\left\|w\right\|>1\}\cap R_i}\left|\int^{\infty}_{0}\tilde{J}_{w}(p)dp\right|d\mu(u)d\mu(v)\\
&=&2\sum_{i=1}^{3} \int\int_{\{\left\|w\right\|>1\}\cap R_i}\left|\int^{\infty}_{0}{J}^{j_i}_{w}(p)dp\right|d\mu(u)d\mu(v).
 \end{eqnarray*}
Now by (\ref{E19}) and (\ref{E20}), we have 
\begin{eqnarray*} 
I\leq 2\sum_{i=1}^{3}\ds\int\int_{\{\left\|w\right\|>1\}\cap R_i}\left(\frac{2\pi}{C_i}+\frac{2}{\left\|w\right\|}b+2A(\left\|w\right\|)a\right)d\mu(u)d\mu(v).
\end{eqnarray*}
If $\left\|w\right\|>1$, then $\frac{1}{\left\|w\right\|}<1$ and $A(\left\|w\right\|)=\frac{4(1+\left\|w\right\|)}{\left\|w\right\|}<8$, moreover, as the support of the measure $\mu\times \mu$ is contained in $K \times K$ which is compact, then 
\begin{equation}\label{E22}
I\leq 6\left(2\pi\max\left\{\frac{1}{C_i}\right\}+2b+16a\right)\mu(K)^2.
\end{equation}
We now estimate $II$, in fact: By (\ref{E7}) and Remark \ref{R1M},
\begin{eqnarray*}
II&=&\int \int_{\left\{\left\|w\right\|\leq1\right\}}\left|\int^{\infty}_{0}\tilde{J}_{w}(p)dp\right|d\mu(u)d\mu(v)=\sum_{i=1}^{3}\int \int_{\left\{\left\|w\right\|\leq1\right\}\cap R_i}\left|\int^{\infty}_{0}\tilde{J}_{w}(p)dp\right|d\mu(u)d\mu(v)\\
&=&2\sum_{i=1}^{3} \int\int_{\{\left\|w\right\|\leq1\}\cap R_i}\left|\int^{\infty}_{0}{J}^{j_i}_{w}(p)dp\right|d\mu(u)d\mu(v).
\end{eqnarray*}
Now by (\ref{E19}) and (\ref{E20}), we have 
\begin{eqnarray}\label{E23}
II&\leq& 2\sum_{i=1}^{3} \int\int_{\{\left\|w\right\|\leq1\}\cap R_i}\left(\frac{2\pi}{C_i\left\|w\right\|}+\frac{4}{\left\|w\right\|}b+2A(\left\|w\right\|)a\right)d\mu(u)d\mu(v) \nonumber \\
&\leq&6\int\int_{\{\left\|w\right\|\leq1\}}\left(\left(\max\left\{\frac{2\pi}{C_i}\right\}+4b+8a\right)\frac{1}{\left\|w\right\|} +8a\right)d\mu(u)d\mu(v).
\end{eqnarray}
\noindent Remember that $\left\|w(u,v)\right\|=d(u,v)$, then
\begin{center}
$\ds\int\int_{\left\{\left\|w\right\|\leq 1\right\}}\frac{1}{\left\|w\right\|}d\mu(u)d\mu(v)=\int\int_{\left\{d(u,v)\leq 1\right\}}\frac{1}{d(u,v)}d\mu(u)d\mu(v)$.
\end{center}
Now, for some  $0<\beta<1$
\begin{eqnarray*}
\ds\int_{\left\{\left\|w\right\|\leq 1\right\}}\frac{1}{d(u,v)}d\mu(v)&=&\sum^{\infty}_{n=1}\int_{{\beta}^n\leq d(u,v)\leq {\beta}^{n-1}}\frac{d\mu(v)}{d(u,v)}\leq \sum^{\infty}_{n=1}{\beta}^{-n}\mu(B_{{\beta}^{n-1}}(u))\\
&\leq& C\sum^{\infty}_{n=1}{\beta}^{-n}({\beta}^{n-1})^d \ \ \   \\ 
&\leq& C\ds\sum^{\infty}_{n=1}{\beta}^{-d}({\beta}^{d-1})^n \ \text{with} \ \ d>1\\
&=&C{\beta}^{-d}\left(\frac{1}{1-{\beta}^{d-1}}-1\right)=\frac{C}{\beta-{\beta}^{d}}.
\end{eqnarray*}

\noindent Therefore, 
\begin{center}
$\ds\int\int_{\left\{\left\|w\right\|\leq 1\right\}}\frac{1}{\left\|w\right\|}d\mu(u)d\mu(v)\leq \mu({\mathbb{R}^2})\frac{C}{\beta-{\beta}^d}$.
\end{center}

\noindent Also, $\ds\int\int_{\left\{\left\|w\right\|\leq 1\right\}} 48d\mu(u)d\mu(v)\leq 8a{\mu(K)}^2<\infty$.\\
Using these last two inequalities and the equation (\ref{E23}) we have that  
\begin{equation}\label{E24}
II\leq 6\left(\left(\max\left\{\frac{2\pi}{C_i}\right\}+4b+8a\right)\frac{C}{\beta-{\beta}^{d}} +8a\mu(K)^{2}\right).
\end{equation}

\noindent Using Fubini, the by equations (\ref{E21}), (\ref{E22}) and (\ref{E24}) we have 
\begin{center}
$\ds \frac{\pi}{L}\int^{2\pi}_{0}\int^{\infty}_{-\infty}|\hat{\mu}_\theta(p)|^2dpd\theta \leq I+II \leq 6\left(2\pi\max\left\{\frac{1}{C_i'}\right\}+2b+16a\right)\mu(K)^2+6\left(\left(\max\left\{\frac{2\pi}{C_i}\right\}+4b+8a\right)\frac{C}{\beta-{\beta}^{d}} +8a\mu(K)^{2}\right)<\infty$.
\end{center}
Therefore, $\ds\int^{\infty}_{-\infty}|\hat{\mu}_\theta(p)|^2dp<\infty$ for almost all $\theta \in \left(-\frac{\pi}{2},\frac{\pi}{2}\right)$.\\
If exists $\theta \in \left(-\frac{\pi}{2},\frac{\pi}{2}\right)$ such that $\ds\int^{\infty}_{-\infty}|\hat{\mu}_\theta(p)|^2dp=0$, then $\ds\int^{\infty}_{-\infty}\left|\varphi(x)\right|^2dx=\int^{\infty}_{-\infty}|\hat{\mu}_\theta(p)|^2dp=0$ where $\varphi(x)=\ds\frac{1}{\sqrt{2\pi}}\int^{\infty}_{-\infty}e^{ixp}\hat{\mu_{\theta}}(p)dp$. This implies that $\varphi\equiv 0$ almost every where, but $d\mu_{\theta}=\varphi dx$. This is $\mu_{\theta}(\mathbb{R})=\int^{\infty}_{-\infty}\varphi(x)dx=0$ and this implies that $\mu(\mathbb{R}^2)=0$, this contradicts the fact that $d$-measure of Haussdorff of $K$ is positive.\\
\ \\
The result follows of Lemma \ref{L5M}, in the case $0<m_d(K)<\infty$. \\
\ \\
In the general case, we take $0<m_{d'}(K')<\infty$ with $1<d'<d$ and $K'\subset K$ (cf. \cite{Falconer}).  Then, by the same argument $\pi_{\theta}(K')$ has positive measure for almost all $\theta$, and since $\pi_{\theta}(K')\subset \pi_{\theta}(K)$, then the same is true for $\pi_{\theta}(K)$.
\end{proof}

\newpage

$$\textbf{Acknowledgments}$$
The author is thankful to IMPA for the excellent ambient during
the preparation of this manuscript. The author is also grateful to Carlos Gustavo Moreira for carefully reading the preliminary version of this work and their comments in this work. This work was financially supported by CNPq-Brazil, Capes, and the Palis Balzan Prize.

\bibliographystyle{alpha}	
\bibliography{bibtex}
\noindent \textbf{Sergio Augusto Roma\~na Ibarra}\\
Universidade Federal do Rio de Janeiro, Campus Maca\'e\\
Av. do Alo\'izio, 50 - Gl\'oria, Maca\'e \\ 
27930-560 Rio de Janeiro-Brasil\\
E-mail: sergiori@macae.ufrj.br\\

\end{document}